\theoremstyle{definition}
\newtheorem{thm}{Theorem}[section]
\newtheorem{lem}[thm]{Lemma}
\newtheorem{prop}[thm]{Proposition}
\newtheorem{ex}[thm]{Example}
\newtheorem{defi}[thm]{Definition}
\theoremstyle{remark}
\newtheorem{rem}{Remark}[section]
\newtheorem*{acknowledgment}{Acknowledgments}
\numberwithin{equation}{section}
\numberwithin{figure}{section}
\newcommand{\End}{\mathrm{End}}
\newcommand{\Hom}{\mathrm{Hom}}
\newcommand{\bigslant}[2]{{\raisebox{.2em}{$#1$}\left/\raisebox{-.2em}{$#2$}\right.}}
\def\e{{\epsilon}}
\def\co{\colon\thinspace}
\title{The Heisenberg double of involutory Hopf algebras and invariants of closed $3$-manifolds}
\author{Serban Matei Mihalache}%
\address{Department of Mathematics, Tohoku University, 
6-3, Aoba, Aramaki-aza, Aoba-ku, 
Sendai, 980-8578, Japan}
\email{matei.mihalache.q3@dc.tohoku.ac.jp}
\author{Sakie Suzuki}%
\address{Department of Mathematical and Computing Science, School of Computing,
Tokyo Institute of Technology,
2-12-1 Ookayama, Meguro-ku, Tokyo 152-8552, Japan}
\email{sakie@c.titech.ac.jp}
\author{Yuji Terashima}%
\address{Department of Mathematics, Tohoku University, 
6-3, Aoba, Aramaki-aza, Aoba-ku, 
Sendai, 980-8578, Japan}
\email{yujiterashima@tohoku.ac.jp}
\begin{document}
\maketitle

\begin{abstract}
We construct an invariant of closed oriented $3$-manifolds using a finite dimensional, involutory, unimodular and counimodular Hopf algebra $H$. We use the framework of normal o-graphs introduced by R. Benedetti and C. Petronio, in which one can represent a branched ideal triangulation via an oriented virtual knot diagram. 
We assign a copy of a canonical element of the Heisenberg double $\mathcal{H}(H)$ of $H$ to each real crossing, which represents a branched ideal tetrahedron.
The invariant takes values in the cyclic quotient $\mathcal{H}(H)/{[\mathcal{H}(H),\mathcal{H}(H)]}$, which is isomorphic to the base field.
In the construction we use only the canonical element and  structure constants of $H$ and we do not use any representations of $H$. This, together with the finiteness and locality conditions of the moves for normal o-graphs,  makes the calculation of our invariant  rather simple and easy to understand. When $H$ is the group algebra of a finite group, the invariant counts the number of group homomorphisms from the fundamental group of the $3$-manifold to the group.
\end{abstract} 

\tableofcontents

\section{Introduction}
S. Baaj and G. Skandalis \cite{BS} and R.M. Kashaev \cite{Ka} found a striking fact that for the Heisenberg double $\mathcal{H}(H)$ of any finite dimensional Hopf algebra $H$, there exists a canonical element $T$ in $\mathcal{H}(H)^{\otimes 2}$ satisfying 
the pentagon equation
\begin{align*}
T_{12}T_{13}T_{23}=T_{23}T_{12}.
\end{align*}
In \cite{Ka}, he also showed that the Drinfeld double $\mathcal{D}(H)$ of $H$ can be realized as a subalgebra of  $\mathcal{H}(H)^{\otimes 2}$, and observed that the universal $R$-matrix of $\mathcal{D}(H)$
%, which satisfies the quantum Yang-Baxter equation, 
can be represented as a combination of four copies of $T$, where  the quantum Yang-Baxter equation of the universal $R$-matrix follows from a sequence of the pentagon equation of $T$.
%representing the universal $R$-matrix of the Drinfeld double as a combination of four canonical elements. 
%The canonical element was also given in the paper \cite{BS} by S. Baaj and G. Skandalis independently. 
Using his results,  the second author \cite{S} reconstructed the universal quantum $\mathcal{D}(H)$ invariant of framed tangles by assigning a copy of the canonical element $T$ to each branched ideal tetrahedron of the tangle complements, and expected that this construction leads to invariants of pairs of a $3$-manifold and geometrical input. In the present paper, we show that this construction defines an invariant of closed oriented $3$-manifolds when the Hopf algebra $H$ is involutory, unimodular and counimodular.

In the formulation of our invariant, we use a diagrammatic representation of closed oriented $3$-manifolds introduced by R. Benedetti and C. Petronio  \cite{BP}.
%R. Benedetti and C. Petronio \cite{BP} gave a beautiful work to present $3$-manifolds diagrammatically using branched spines. 
Their diagrams, which are called closed normal o-graphs, are oriented virtual knot diagrams satisfying certain conditions. 
%$4$-valent oriented graphs each of whose vertices is indicated positive or negative, as a crossing of oriented knot diagrams.
They showed that homeomorphism classes of closed oriented $3$-manifolds are identified with equivalence classes of closed normal o-graphs up to certain moves. A crossing of a closed normal o-graph represents a branched ideal tetrahedron in the corresponding $3$-manifold, and the orientation of the strand specifies a way to extend these local branching structures to a global one.
Our invariant is obtained by assigning a copy of canonical element $T$ (or its inverse) to each crossing of closed normal o-graphs,
and by reading them along the strands.
The invariant takes values in the cyclic quotient $\mathcal{H}(H)/{[\mathcal{H}(H),\mathcal{H}(H)]}$,  which is isomorphic to the base field through the character of the Fock space representation. 

The proof of the invariance will be performed by checking the invariance under each move for normal o-graphs.
There are two important types: the MP-moves and the  CP-move.
An MP-move represents a Pachner move equipped with a branching structure, which corresponds to a modified pentagon equation in the level of the invariant.
Here, the modification is related to the antipode of $H$, and we need to assume that the antipode is involutive.
Up to the MP-moves (and the 0-2 move),  a closed normal o-graph represents a closed oriented $3$-manifold with a combing.
The CP-move is a special move for branched triangulations, which changes the combing. The invariance under the CP-move will be shown using tensor networks, where we need to assume that $H$ is in addition both unimodular and counimodular. 
Using tensor networks we will also show a connected sum formula of the invariant.

There are several invariants based on triangulations and involutory Hopf algebras \cites{TV, BW, Ku1}.
Our invariant uses only the canonical element and  the structure constants of Hopf algebras and does not use any representations, and thus the construction is rather simple.
Furthermore, the moves for closed normal o-graphs are local and finite, unlike  handle slide moves in the Kirby calculus for link surgery presentations of  $3$-manifolds, and each of these finite moves corresponds to an algebraic equation nicely, like the MP-moves and the pentagon equation. Thus the proof of the invariance is also rather easy to understand.

The main example of finite dimensional, involutory, unimodular and counimodular Hopf algebras is the group algebra $\mathbb{C}[G]$ of a finite group $G$. We show that in this case the invariant is same as the number of homomorphisms from the fundamental group of the $3$-manifold to $G$. 
There are several other examples of Hopf algebras which would be interesting to be used. The restricted enveloping algebras of restricted Lie algebras \cite{J} are finite dimensional involutory Hopf algebras.
In \cite{MP}, S. Majid and A. Pachol classified Hopf algebras under dimension $\leq$ 4 over the field of characteristic $2$.
M. Kim \cite{Ki} also gave some examples of finite dimensional, involutory unimodular and counimodular  (commutative and cocommutative) Hopf algebras  which are not group algebras.

The rest of the paper is organized as follows. In Section \ref{sec:hd}, we discuss Hopf algebras and the Heisenberg double of them. %, which will be the ingredient to define the 3-manifold invariant.
In Section \ref{sec:ce}, following \cite{BP}, we explain how to represent closed oriented $3$-manifolds in a combinatorial manner using closed normal o-graphs.
In Section \ref{sec:ri}, we explain the construction of our invariant $Z(M;\mathcal{H}(H))=Z(\Gamma ;\mathcal{H}(H))$ using the Heisenberg double  $\mathcal{H}(H)$  and a closed normal o-graph $\Gamma$ which represents a $3$-manifold $M$.
The proof of the invariance will be given in two sections.
In Section \ref{sec:mt}, we prove the invariance  under all moves except for the CP-move.
In Section \ref{sec:tnm}, we reformulate our invariant using tensor networks, and  by using them %, which omits the usage of Heisenberg double.
we prove the  invariance  under the CP-move.
In Section \ref{sec:csf}, we show the connected sum formula, and study the case for the group algebra $\mathbb{C}[G]$ of a finite group $G$.

\begin{acknowledgment}
 We would like to thank S. Baseilhac, R. Benedetti, K. Hikami, M. Ishikawa, R.M. Kashaev, A. Kato, Y. Koda, T.T.Q. L$\hat{\mathrm{e}}$ for valuable discussions. This work is partially supported by
 JSPS KAKENHI Grant Number JP17K05243, JP19K14523, and by JST CREST Grant Number JPMJCR14D6.
\end{acknowledgment}

\section{Hopf algebra and Heisenberg double}
\label{sec:hd}
In this section, we quickly review the definition and some properties of the Heisenberg double of Hopf algebras. 

\subsection{Hopf algebra}\label{Hopf}
A \textit {Hopf algebra} $H$ over a field $\mathbb{K}$ is a vector space equipped with five linear maps
\begin{align*}
&M\co H\otimes H \to H, \quad
1\co \mathbb{K} \to H, \quad
\Delta\co H  \to H\otimes H, \quad
\epsilon\co H \to \mathbb{K}, \quad
S\co H \to H,
\end{align*}
called multiplication, unit, comultiplication, counit and antipode respectively, satisfying the standard axioms of Hopf algebras.
When the antipode is involutive, i.e., $S^2 = \mathrm{id}_H$, we call $H$  \textit{involutory}.
Throughout the paper, $H$ will denote a finite dimensional Hopf algebra and $H^*$ will denote the \textit {dual Hopf algebra} of $H$.
We will also use the Sweedler notation $\Delta(x)=x_{(1)}\otimes x_{(2)}$ for $x\in H$. 

Recall that a \textit {right integral}
of a Hopf algebra $H$ is an element $\mu_{R} \in H^*$ satisfying $\mu_{R}\cdot f=\mu_{R}f(1)$ for every $f\in H^*$.
A \textit {left integral} is defined similarly.
Since $H$ is finite dimensional, a left (resp. right) integral of the dual Hopf algebra $H^*$ is an element of $H$ and is called a \textit {left (resp. right) cointegral} of $H$.
It is well known that for a finite dimensional Hopf algebra, an integral always exists and is unique up to scalar multiplication.
We say $H$ is \textit {unimodular} when the left cointegrals are also right cointegrals, and \textit {counimodular} when the left integrals are also the right integrals.
For more details on Hopf algebras and its integrals, see \cites{R, Ku1}.

\subsection{Heisenberg double}
We use the left action of $H$ on $H^*$ defined by $(a\rightharpoonup f)(x):=f(xa)$, for $a, x\in H$ and $f\in H^*$.
The \textit{Heisenberg double}
\begin{align*}
\mathcal{H}(H)=H^{*}\otimes H
\end{align*}
of a Hopf algebra $H$ is a $\mathbb{K}$-algebra with unit $\epsilon \otimes 1$ and multiplication given by
\begin{gather*}
(f\otimes a)\cdot (g\otimes b)=f\cdot (a_{(1)}\rightharpoonup g)\otimes a_{(2)}b,
\end{gather*}
for $a,b\in H$ and $f,g\in H^*$.

Let $\{e_i\}$ be the basis of $H$ and $\{e^i\}$ its dual basis.
Then the canonical element is given by
\begin{align}
T=\sum_i(\epsilon \otimes e_i)\otimes (e^i\otimes 1) \quad \in \mathcal{H}(H)^{\otimes 2} \label{eq:can}
\end{align}
and its inverse by
\begin{align}
\overline{T}=\sum_i(\epsilon \otimes S(e_i))\otimes (e^i\otimes 1) \quad \in \mathcal{H}(H)^{\otimes 2}. \label{eq:can inv}
\end{align}

In the case of the Drinfeld double $\mathcal{D}(H)$ of $H$, the canonical element satisfies the quantum Yang-Baxter equation, which in turn produces invariants of links and 3-manifolds \cites{H, KR}. One important feature of the Heisenberg double which plays a central role in our construction of invariants is that the canonical element satisfies the pentagon equation.
\begin{prop}(\cites{BS, Ka})\label{pentagon}
The pentagon equation 
\begin{align}\label{eq:pentagon equation}
T_{12}T_{13}T_{23}=T_{23}T_{12}
\end{align}
holds in $\mathcal{H}(H)^{\otimes 3}$.
\end{prop}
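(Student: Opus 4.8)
The plan is to verify \eqref{eq:pentagon equation} by a direct computation in $\mathcal{H}(H)^{\otimes 3}=(H^{*}\otimes H)^{\otimes 3}$, expanding everything in the dual bases $\{e_i\}$, $\{e^i\}$ and repeatedly applying the multiplication rule of the Heisenberg double. Write $(x)_r$ for the element $x\in\mathcal{H}(H)$ placed in the $r$-th tensor factor (with the unit $\epsilon\otimes 1$ in the remaining factors), so that $T_{12}=\sum_i(\epsilon\otimes e_i)_1(e^i\otimes 1)_2$, and likewise for $T_{13}$ and $T_{23}$. Since elements sitting in distinct tensor legs commute, the only products to be evaluated are products inside a single copy of $\mathcal{H}(H)$, governed by $(f\otimes a)(g\otimes b)=f(a_{(1)}\rightharpoonup g)\otimes a_{(2)}b$. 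The elementary facts I will use are $a\rightharpoonup\epsilon=\epsilon(a)\epsilon$, $1\rightharpoonup g=g$, $\epsilon g=g$, together with the two dual-basis completeness identities
\begin{align*}
\sum_i(a\rightharpoonup e^i)\otimes e_i=\sum_i e^i\otimes e_i a,\qquad \sum_k\Delta(e_k)\otimes e^k=\sum_{i,j}e_i\otimes e_j\otimes e^ie^j .
\end{align*}

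First I would compute the left-hand side. The product $T_{12}T_{13}$ only touches leg $1$, where $(\epsilon\otimes e_i)(\epsilon\otimes e_j)=\epsilon\otimes e_ie_j$, giving $T_{12}T_{13}=\sum_{i,j}(\epsilon\otimes e_ie_j)_1(e^i\otimes 1)_2(e^j\otimes 1)_3$. Multiplying on the right by $T_{23}$ uses $(e^i\otimes 1)(\epsilon\otimes e_k)=e^i\otimes e_k$ in leg $2$ and $(e^j\otimes 1)(e^k\otimes 1)=e^je^k\otimes 1$ in leg $3$, so that
\begin{align*}
T_{12}T_{13}T_{23}=\sum_{i,j,k}(\epsilon\otimes e_ie_j)_1(e^i\otimes e_k)_2(e^je^k\otimes 1)_3 .
\end{align*}
For the right-hand side, $T_{23}T_{12}=\sum_{i,k}(\epsilon\otimes e_i)_1\big((\epsilon\otimes e_k)(e^i\otimes 1)\big)_2(e^k\otimes 1)_3$, and the inner product equals $((e_k)_{(1)}\rightharpoonup e^i)\otimes(e_k)_{(2)}$. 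Applying the first completeness identity with $a=(e_k)_{(1)}$ to sum out the index $i$, and then the second identity to sum out $k$, transforms $T_{23}T_{12}$ into precisely the expression for $T_{12}T_{13}T_{23}$ displayed above, which proves the proposition.

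I expect no serious obstacle here; the only thing requiring care is the bookkeeping — keeping track of which slot of which leg each basis vector occupies while reindexing, and applying the two dual-basis identities in the correct slots.

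Finally, I would record the conceptual shortcut. The Heisenberg double $\mathcal{H}(H)$ acts faithfully on $H^{*}$ by the Fock (Schr\"odinger) representation $(f\otimes a)\cdot\phi=f(a\rightharpoonup\phi)$, and a short calculation shows that under the induced action of $\mathcal{H}(H)^{\otimes 2}$ on $H^{*}\otimes H^{*}\cong(H\otimes H)^{*}$ the canonical element $T$ becomes the pentagon operator $(W\Psi)(x,y)=\Psi(xy_{(1)},y_{(2)})$. Then $W_{12}W_{13}W_{23}$ and $W_{23}W_{12}$ both send $\Phi$ to the map $(x,y,z)\mapsto\Phi(xy_{(1)}z_{(1)},y_{(2)}z_{(2)},z_{(3)})$ — this time coassociativity is used, to rebracket $z_{(1)(1)}\otimes z_{(1)(2)}\otimes z_{(2)}$ — and faithfulness of the Fock representation then yields \eqref{eq:pentagon equation}.
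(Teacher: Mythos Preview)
Your proof is correct. The first, purely algebraic, argument and the paper's proof share the same opening computation of $T_{12}T_{13}T_{23}$, but they diverge on the right-hand side: you stay inside $\mathcal{H}(H)^{\otimes 3}$ and reindex $T_{23}T_{12}$ directly via the two dual-basis identities, whereas the paper strips off the outer $\epsilon\otimes\cdots\otimes 1$, identifies the inner $(H\otimes H^{*})^{\otimes 2}$ with $\End(H\otimes H)$ by $x\otimes f\otimes y\otimes g\mapsto\bigl(a\otimes b\mapsto f(a)x\otimes g(b)y\bigr)$, and checks that both sides act as $a\otimes b\mapsto ab_{(1)}\otimes b_{(2)}$. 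Your ``conceptual shortcut'' via the Fock action on $(H^{*})^{\otimes 3}$ is exactly the dual of the paper's argument: the pentagon operator $W$ you write down is the transpose of the paper's endomorphism of $H\otimes H$, and faithfulness of the Fock representation replaces the paper's isomorphism $\iota$. So your second proof and the paper's are essentially the same; your first proof buys you a representation-free verification at the cost of slightly more index bookkeeping.
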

\begin{proof}
Note that
\begin{align*}
T_{12}T_{13}T_{23}&=\sum\limits_{i,j,k}(\epsilon\otimes e_i)(\epsilon\otimes e_j)\otimes(e^i\otimes 1)(\epsilon\otimes e_k)\otimes(e^j\otimes 1)(e^k\otimes 1)
\\
&=\sum\limits_{i,j,k}(\epsilon\otimes e_i e_j)\otimes(e^i\otimes e_k)\otimes(e^j e^k\otimes 1)\quad \in \epsilon\otimes (H\otimes H^*)^{\otimes 2}\otimes 1,
\\
T_{23}T_{12}&=\sum\limits_{i,j}(\epsilon\otimes e_j)\otimes(\epsilon\otimes e_i)(e^j\otimes 1)\otimes(e^i\otimes 1)\\
&=\sum\limits_{i,j}(\epsilon\otimes e_j)\otimes(e_{i(1)}\rightharpoonup e^j\otimes e_{i(2)})\otimes(e^i\otimes 1) \quad  \in \epsilon\otimes (H\otimes H^*)^{\otimes 2}\otimes 1.
\end{align*}
Let us identify $(H\otimes H^*)^{\otimes 2}$ with $\text{End}(H\otimes H)$ through the map
\begin{align*}
& \iota\colon x\otimes f\otimes y\otimes g\mapsto(a\otimes b\mapsto f(a)x\otimes g(b)y),
\end{align*}
for $x,y,a,b \in H$ and $f,g\in H^*$.
Then, after identifying between $\epsilon\otimes (H\otimes H^*)^{\otimes 2}\otimes 1$ and $(H\otimes H^*)^{\otimes 2}$, we can see that the both elements $T_{12}T_{13}T_{23}$ and $T_{23}T_{12}$ are sent by $ \iota $ to 
the same element as follows.
\begin{align*}
\iota \left(T_{12}T_{13}T_{23}\right)(a\otimes b)&=e^i(a)e_ie_j\otimes e^je^k(b)e_k
\\
&=ae_j\otimes e^j(b_{(1)})e^k(b_{(2)})e_k
\\
&=ab_{(1)}\otimes b_{(2)}, 
\\
 \iota (T_{23}T_{12}) (a\otimes b)
&= e_j (e_{i(1)}\rightharpoonup e^j)(a)\otimes e^i(b)e_{i(2)}
\\
&=e_je^j(ae_{i(1)})\otimes e^i(b)e_{i(2)}
\\
&=ae_{i(1)}\otimes e^i(b)e_{i(2)}
\\
&=ab_{(1)}\otimes b_{(2)}.
\end{align*}
Thus we have the assertion.
\end{proof}

The Heisenberg double $\mathcal{H}(H)$ has a canonical module $F(H^{*})=H^*$, which we call the \textit{Fock space}, with the  action $\phi:\mathcal{H}(H)\to\End(H^*)$ given by
\begin{align}
\phi(f\otimes a)(g)=(f\otimes a)\triangleright g:=f\cdot (a\rightharpoonup g) \label{eq:fock}
\end{align}
for $(f\otimes a) \in \mathcal{H}(H)$ and $g\in H^*$. 
Let $\raisebox{2pt}{$\chi$}_{Fock}$ be the character associated to the Fock space.
For a $\mathbb{K}$-algebra $A$, let $[A,A]$ be the subspace spanned by $\{ xy-yx \mid x, y \in A\}$ over $\mathbb{K}$.
We are interested in the quotient space ${\mathcal{H}(H)}/{[\mathcal{H}(H),\mathcal{H}(H)]}$ since this is the space where the invariant takes values in.

\begin{prop}\label{prop:K}
The character of the Fock space
\begin{align*}
\raisebox{2pt}{$\chi$}_{Fock}:\bigslant{\mathcal{H}(H)}{[\mathcal{H}(H),\mathcal{H}(H)]}\to \mathbb{K}
\end{align*}
is an isomorphism between vector spaces.
\end{prop}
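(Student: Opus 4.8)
The plan is to deduce the statement from the classical fact that the Fock representation realizes the Heisenberg double as a full matrix algebra. As a preliminary remark, $\chi_{Fock}$ does descend to the quotient: since $\phi$ is an algebra homomorphism, $\chi_{Fock}(xy)=\mathrm{tr}_{H^*}\bigl(\phi(x)\phi(y)\bigr)=\mathrm{tr}_{H^*}\bigl(\phi(y)\phi(x)\bigr)=\chi_{Fock}(yx)$, so $\chi_{Fock}$ vanishes on $[\mathcal{H}(H),\mathcal{H}(H)]$. The whole proposition then follows once one knows that
\[
\phi\colon\mathcal{H}(H)\longrightarrow\End(H^*)
\]
is an \emph{isomorphism} of $\mathbb{K}$-algebras. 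Indeed, granting this, $\mathcal{H}(H)\cong M_n(\mathbb{K})$ with $n=\dim_{\mathbb{K}}H$, and under this identification $\chi_{Fock}$ becomes the ordinary trace $\mathrm{tr}\colon M_n(\mathbb{K})\to\mathbb{K}$. An elementary computation with matrix units gives $[M_n(\mathbb{K}),M_n(\mathbb{K})]=\ker(\mathrm{tr})$, of codimension $1$, while $\mathrm{tr}$ is clearly onto; hence $\chi_{Fock}$ is surjective with kernel exactly $[\mathcal{H}(H),\mathcal{H}(H)]$, and therefore induces an isomorphism $\mathcal{H}(H)/[\mathcal{H}(H),\mathcal{H}(H)]\xrightarrow{\ \sim\ }\mathbb{K}$.

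So the real content is the bijectivity of $\phi$. That $\phi$ is an algebra homomorphism is exactly the assertion, recorded above, that $F(H^*)=H^*$ is an $\mathcal{H}(H)$-module via $\phi$. Since $\dim_{\mathbb{K}}\mathcal{H}(H)=n^{2}=\dim_{\mathbb{K}}\End(H^*)$, it is enough to prove that $\phi$ is injective. Here is the argument I would use. Identify $\mathcal{H}(H)=H^*\otimes H$ with $\Hom(H,H)$ by sending $f\otimes a$ to the operator $y\mapsto f(y)a$; for $z=\sum_i f_i\otimes e_i$ this is $\hat z$, $\hat z(y)=\sum_i f_i(y)e_i$. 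Transposing $\phi(z)$ with respect to the pairing $H\times H^*\to\mathbb{K}$ and using $(a\rightharpoonup g)(y)=g(ya)$, one finds $\phi(f\otimes a)^{*}(y)=f(y_{(1)})\,y_{(2)}\,a$, so the relation $\phi(z)=0$ is equivalent to
\[
\sum_{(y)} y_{(2)}\,\hat z(y_{(1)})=0\qquad\text{for every }y\in H .
\]
Now the linear map $L\colon\Hom(H,H)\to\Hom(H,H)$ defined by $L(\hat z)(y)=\sum_{(y)} y_{(2)}\,\hat z(y_{(1)})$ is invertible: a direct check, using coassociativity together with the antipode identities $\sum_{(y)} y_{(2)}\,S^{-1}(y_{(1)})=\epsilon(y)\,1=\sum_{(y)} S^{-1}(y_{(2)})\,y_{(1)}$ (valid since the antipode of a finite dimensional Hopf algebra is bijective; see \cite{R}), shows that $\hat w\mapsto\bigl(y\mapsto\sum_{(y)} S^{-1}(y_{(2)})\,\hat w(y_{(1)})\bigr)$ is a two-sided inverse of $L$. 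Hence $\hat z=0$, i.e. $z=0$, and $\phi$ is injective, so bijective.

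The one genuine obstacle is the injectivity of $\phi$ just sketched --- equivalently, the faithfulness of the Fock representation, or the simplicity of $\mathcal{H}(H)$ as an algebra; everything else is bookkeeping. This faithfulness is well known in the theory of Heisenberg doubles (cf. \cite{Ka}, \cite{BS}); the $S^{-1}$-twisted averaging operator $L$ above is one concrete way to verify it, and it uses no semisimplicity of $H$ --- in particular neither involutivity nor (co)unimodularity. Once $\phi$ is known to be an algebra isomorphism, the reduction to $M_n(\mathbb{K})$ in the first paragraph completes the proof.
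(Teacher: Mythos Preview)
Your proof is correct and follows essentially the same strategy as the paper: reduce everything to the fact that $\phi\colon\mathcal{H}(H)\to\End(H^*)$ is an algebra isomorphism, and then invoke the standard fact that the trace induces an isomorphism $M_n(\mathbb{K})/[M_n(\mathbb{K}),M_n(\mathbb{K})]\xrightarrow{\sim}\mathbb{K}$. The only difference is in how bijectivity of $\phi$ is established: the paper quotes an explicit preimage formula from \cite{L} (namely $F\mapsto\sum_{i,j}F(e^i)e^j\otimes S^{-1}(e_j)e_i$) to show \emph{surjectivity}, whereas you transpose $\phi$ and invert the resulting operator $L$ with an $S^{-1}$-twist to show \emph{injectivity}; in both cases a dimension count finishes the job, and in both cases the antipode is the mechanism behind the inverse. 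These are two sides of the same computation rather than genuinely different routes.
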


\begin{proof}
In  \cite{L}*{Proposition 6.1} it was shown that 
for $F \in \End(H^*)$, the element
\begin{align*}
\sum_{i,j}F(e^i)e^j\otimes S^{-1}(e_j)e_i \in \mathcal{H}(H)
\end{align*}
is in the preimage of $F$ by $\phi$, i.e., 
for any $g \in H^*$ we have,
\begin{align*}
\sum_{i,j}\phi(F(e^i)e^j\otimes S^{-1}(e_j)e_i)(g) &= \sum_{i,j}F(e^i)e^j\cdot(S^{-1}(e_j)e_i\rightharpoonup g)\\
&= \sum_{i,j}g_{(3)}(e_i)\cdot F(e^i)\cdot g_{(2)}(S^{-1}(e_j))e^j\cdot g_{(1)}\\
&= F(g_{(3)})\cdot S^{-1}(g_{(2)})\cdot g_{(1)}\\
&=F(g_{(2)})\epsilon(g_{(1)})\\
&= F(g).\\
\end{align*}
Since dim$\mathcal{H}(H)=\text{dim}\text{End}(H^*)$, it follows that $\phi$ is bijective, hence $\phi$ is an algebra isomorphism.

Note that $\text{End}(H^*)$ is a matrix algebra, thus the canonical trace
\begin{align*}
\text{tr}\co\bigslant{\text{End}(H^*)}{[\text{End}(H^*),\text{End}(H^*)]}\to \mathbb{K}
\end{align*}
is also an isomorphism. Thus we have the assertion.
\end{proof}

When the antipode $S$ is involutive, $\raisebox{2pt}{$\chi$}_{Fock}$ can be given in terms of integrals.
Let $\mu_{R} \in H^*$ and $e_{L} \in H$ be a right integral and a left cointegral satisfying $\mu_{R}(e_{L})=1$.
\begin{prop}\label{prop:fock is int}
For an involutory Hopf algebra $H$, we have
\begin{align*}
\raisebox{2pt}{$\chi$}_{Fock}(f\otimes a)&=f(e_L)\mu_R(a)
\end{align*}
for $f\otimes a\in\mathcal{H}(H)$.
\end{prop}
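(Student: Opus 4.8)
The plan is to identify the character $\raisebox{2pt}{$\chi$}_{Fock}$ explicitly by computing the trace of the Fock action $\phi(f\otimes a)\in\End(H^*)$, and then to recognize the resulting scalar as $f(e_L)\mu_R(a)$ using the defining property of a cointegral. Concretely, for $f\otimes a\in\mathcal{H}(H)$ the operator $\phi(f\otimes a)$ acts on $g\in H^*$ by $g\mapsto f\cdot(a\rightharpoonup g)$. Writing this out in the dual basis $\{e^i\}$ with dual pairing $\langle\cdot,\cdot\rangle$, one gets $\phi(f\otimes a)(e^i)=\sum_j \langle e^j, ?\rangle e^j$ type expressions, and the trace is $\sum_i \langle \phi(f\otimes a)(e^i), e_i\rangle$. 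First I would expand $a\rightharpoonup e^i$ and the product $f\cdot(a\rightharpoonup e^i)$ in Sweedler notation on $H^*$, so that the trace becomes a sum of the form $\sum_i \langle f\cdot(a\rightharpoonup e^i), e_i\rangle$, which after using $\sum_i e^i\otimes e_i$ as the canonical element collapses to an expression purely in terms of the structure maps of $H$ evaluated at $f$ and $a$.

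The key algebraic point is the following: when one unwinds the trace $\mathrm{tr}\,\phi(f\otimes a)$, the dependence on $f$ and on $a$ separates, and the coefficient that appears is $\sum$ (over a basis) of terms that reconstruct a two-sided cointegral-like element. Precisely, I expect the trace to reduce to $f\big(\Lambda'\big)\cdot \mu'(a)$ where $\Lambda'\in H$ is characterized by $x\Lambda' = \epsilon(x)\Lambda'$ for all $x$ (a left cointegral) and $\mu'\in H^*$ is characterized by $\mu'\cdot h = \mu'\,h(1)$ (a right integral), the characterization coming out of the computation via the antipode axiom $S(x_{(1)})x_{(2)}=\epsilon(x)1$ and its involutivity $S^2=\mathrm{id}$ (which is exactly where the hypothesis on $S$ enters — without it one would get $S^{-1}$ appearing asymmetrically, as in the proof of Proposition~\ref{prop:K}, and the two-sided normalization would fail). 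By uniqueness of integrals and cointegrals up to scalar, $\Lambda' = c\,e_L$ and $\mu' = c'\mu_R$ for scalars $c,c'$, so $\raisebox{2pt}{$\chi$}_{Fock}(f\otimes a) = cc'\,f(e_L)\mu_R(a)$. To pin down $cc'=1$ I would evaluate both sides on the unit $\epsilon\otimes 1\in\mathcal{H}(H)$: the left side is $\mathrm{tr}(\mathrm{id}_{H^*}) = \dim H$, while the right side is $\epsilon(e_L)\mu_R(1)$; alternatively, and more robustly, I would use the normalization $\mu_R(e_L)=1$ together with a well-chosen test element (e.g. plug $f = $ something dual to $e_L$ and $a = e_L$) to read off the constant directly.

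Alternatively — and this may be cleaner — I would avoid recomputing the trace from scratch by invoking Proposition~\ref{prop:K}: since $\raisebox{2pt}{$\chi$}_{Fock}$ is (up to the isomorphism $\phi$) the canonical matrix trace on $\End(H^*)$, and since the right-hand side $(f\otimes a)\mapsto f(e_L)\mu_R(a)$ is manifestly a linear functional on $\mathcal{H}(H)$ that kills commutators — this last fact requires a short check that $f(e_L)\mu_R(a)$ is a trace on the algebra $\mathcal{H}(H)$, which amounts to the cointegral and integral properties plus involutivity — it descends to a functional on $\mathcal{H}(H)/[\mathcal{H}(H),\mathcal{H}(H)]$, a $1$-dimensional space by Proposition~\ref{prop:K}. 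Hence it is a scalar multiple of $\raisebox{2pt}{$\chi$}_{Fock}$, and the scalar is fixed by testing on any element not in $[\mathcal{H}(H),\mathcal{H}(H)]$; the normalization $\mu_R(e_L)=1$ makes $e^{e_L}\otimes e_L$-type elements convenient for this.

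The main obstacle I anticipate is the bookkeeping in the trace computation: correctly handling the three factors of $\Delta$ on $H^*$ (equivalently, the iterated coproduct that appears when multiplying $f\cdot(a\rightharpoonup g)$ and pairing back with the basis), and making sure the antipode lands on the correct tensor leg so that $S^2=\mathrm{id}$ — rather than $S$ applied an odd number of times — is what is actually needed. This is precisely the step where the involutory hypothesis is used, so it must be traced carefully; once the trace is in the separated form $f(\Lambda')\mu'(a)$ the rest is the standard uniqueness-and-normalization argument.
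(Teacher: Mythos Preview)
Your two outlines are both workable, but neither is the route the paper takes. The paper does not compute the trace of $\phi(f\otimes a)$ from scratch, nor does it argue via the $1$-dimensionality of the cyclic quotient; instead it quotes a closed formula for the trace on $\End(H^*)$ in terms of integrals (from Radford's book),
\[
\mathrm{tr}(F)=\bigl\langle e_L,\;F(\mu_{R(2)})\,S(\mu_{R(1)})\bigr\rangle,
\]
and then simply plugs in $F=\phi(f\otimes a)$ and collapses the result in three lines using $x_{(2)}S(x_{(1)})=\epsilon(x)1$ (this is where involutivity enters). Compared with your first plan, this skips the ``unwinding'' entirely: the separation of the $f$- and $a$-dependence is already built into the trace formula, so there is no need to rediscover a $\Lambda'$ and $\mu'$ and then invoke uniqueness of integrals to identify them. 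Your second plan is a genuinely different argument and has the virtue of being self-contained (no external trace formula), but the traciality check---that $(f\otimes a)\mapsto f(e_L)\mu_R(a)$ kills commutators in $\mathcal{H}(H)$---is itself a computation of roughly the same weight as the paper's whole proof, requiring the Sweedler-type identities relating $\Delta(e_L)$ and $\mu_R$. One caution on your normalization step: evaluating at $\epsilon\otimes 1$ gives $\dim H$ on the left and $\epsilon(e_L)\mu_R(1)$ on the right, and matching these needs the Larson--Radford identity $\epsilon(e_L)\mu_R(1)=\mathrm{tr}(S^2)$, which is a separate result and can vanish in positive characteristic; your alternative test element $\mu_R\otimes e_L$ is the right choice, since $\phi(\mu_R\otimes e_L)$ is the rank-one map $g\mapsto g(e_L)\mu_R$ with trace $\mu_R(e_L)=1$, matching the right-hand side $\mu_R(e_L)\cdot\mu_R(e_L)=1$.
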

\begin{proof}
For $F \in \End(H^*)$, the trace map is given (cf. \cite{R}*{Chap. 10}) by
\begin{align*}
\mathrm{tr}(F)=\langle e_L,F(\mu_{R(2)})S(\mu_{R(1)})\rangle.
\end{align*}
Thus we have
\begin{align*}
\raisebox{2pt}{$\chi$}_{Fock}(f\otimes a)&=\langle e_L, f(a\rightharpoonup\mu_{R(2)})S(\mu_{R(1)})\rangle\\
&=\langle e_L, \mu_{R(3)}(a)f\cdot\mu_{R(2)}\cdot S(\mu_{R(1)})\rangle\\
&=f(e_L)\mu_R(a).\\
\end{align*}
The third equality follows from the fact that $x_{(2)}S(x_{(1)})=\e(x)1$ for an involutory Hopf algebra.
\end{proof}

\section{Closed normal o-graph}
\label{sec:ce}
In order to define an invariant, we first recall the method introduced in \cite{BP} by R. Benedetti and C. Petronio to represent closed oriented 3-manifolds in a combinatorial manner.
This method is based on the theory of branched spines, which are the dual of branched ideal triangulations.

\begin{figure}[b]
  \centering
  \includegraphics{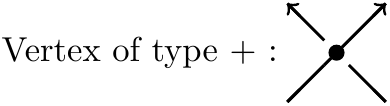}
  \hspace{10mm}
  \includegraphics{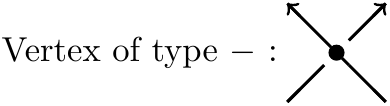}
  \caption{}
  \label{fig:vertex}
\end{figure}

\begin{defi}[\cite{BP}]\textbf{}
\label{def:bp}
A \textit{closed normal o-graph} is an oriented virtual knot diagram, i.e., 
a finite connected $4$-valent graph $\Gamma$ immersed in $\mathbb{R}^2$ with the following conditions:
\begin{spacing}{0.5}
\end{spacing}
\begin{description}
\setlength{\itemsep}{1mm}
\setlength{\parskip}{1mm} 
\item[N1] At each vertex, a sign $+$ or $-$ is indicated, which is represented by the over-under notation as in Figure \ref{fig:vertex}, 
\item[N2] Each edge has an orientation such that it matches among two edges which are opposite to each other at a vertex,
\item[C1] If one removes the vertices and joins the edges which are opposite to each other, the result is a unique oriented circuit,
\end{description}
satisfying the following additional conditions:
\begin{description}
\setlength{\itemsep}{1mm}
\setlength{\parskip}{1mm} 
\item[C2] The trivalent graph obtained from $\Gamma$ by the rules of \cite{BP}, page 7. Figure 1.2. is connected,
\item[C3] Consider the disjoint union of oriented circuits obtained from $\Gamma$ by the rules of \cite{BP}, page 7. Figure 1.3. Then the number of these circuits is exactly one more than the number of vertices of $\Gamma$.
\end{description}
\end{defi}

Let $\mathcal{G}$ be the set of closed normal o-graphs and $\mathcal{M}$ the set of oriented closed 3-manifolds up to orientation preserving homeomorphisms. 
Given a closed normal o-graph $\Gamma\in\mathcal{G}$, one can canonically construct a $3$-manifold $\Phi(\Gamma) \in\mathcal{M}$ as follows.
We fix an orientation of $\mathbb{R}^3$ and place a closed normal o-graph on $\mathbb{R}^2\subset \mathbb{R}^3$.
Then we replace each of its vertices with a tetrahedron (with the orientation given by $\mathbb{R}^3$), and glue the faces of the ideal tetrahedra. The way to glue the faces of ideal tetrahedra is specified by the order of vertices of ideal tetrahedra defined as in Figure \ref{fig:reconstruction map}, i.e.,
we glue faces by the orientation reversing map which preserves the order of vertices.
Conditions on closed normal o-graphs ensure that the result is an ideally triangulated 3-manifolds with $S^2$ boundary. 
Then, after we cap the boundary, the result defines an element $\Phi(\Gamma) \in\mathcal{M}$.
Here, for the geometrical meaning of the order of the vertices of ideal tetrahedra, see Remark \ref{branch}, where the meaning of the technical conditions \textbf{C1} \textbf{C2} and \textbf{C3} are also explained.
We denote the construction map obtained in the above way by $\Phi\co \mathcal{G} \to \mathcal{M}$. 

\begin{figure}
    \centering
    \includegraphics{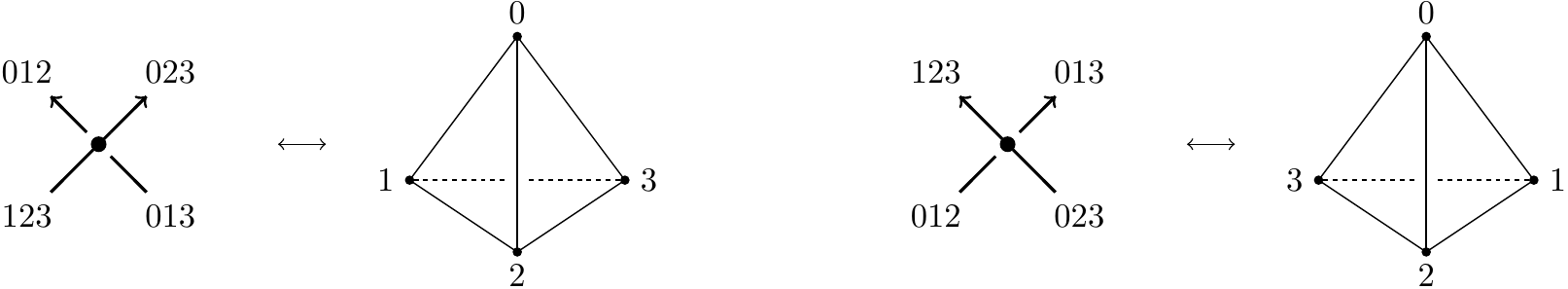}
    \caption{}
    \label{fig:reconstruction map}
\end{figure}

The map $\Phi$ is not one to one, and in order to make it so, we need the following local moves of the diagrams.
\begin{description}
\setlength{\itemsep}{1mm}
\setlength{\parskip}{1mm} 
\item[1]Planer isotopy of the diagram and the \textit{Reidemeister type moves} described in Figure \ref{fig:RM}.
\item[2]The \textit{0-2 move} and the \textit{MP-moves} described in Figure \ref{fig:02} and Figure \ref{fig:MP}, respectively.
\item[3]The \textit{CP-move} in Figure \ref{fig:CP}.
\end{description}

\begin{figure}
  \centering
  \begin{minipage}{0.62\columnwidth}
    \centering
    \includegraphics{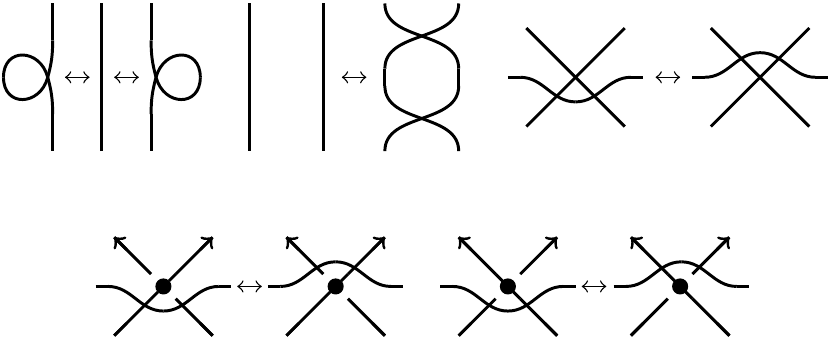}
    \caption{The Reidemeister type moves.}
    \label{fig:RM}
  \end{minipage}
  %\hspace{5mm}
  \begin{minipage}{0.37\columnwidth}
    \centering
    \includegraphics{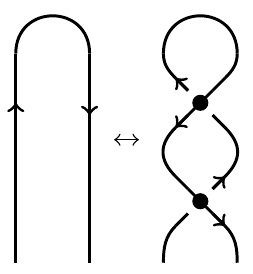}
    \caption{The 0-2 move.}
    \label{fig:02}
  \end{minipage}
\end{figure}

\begin{figure}
  \centering
  \includegraphics{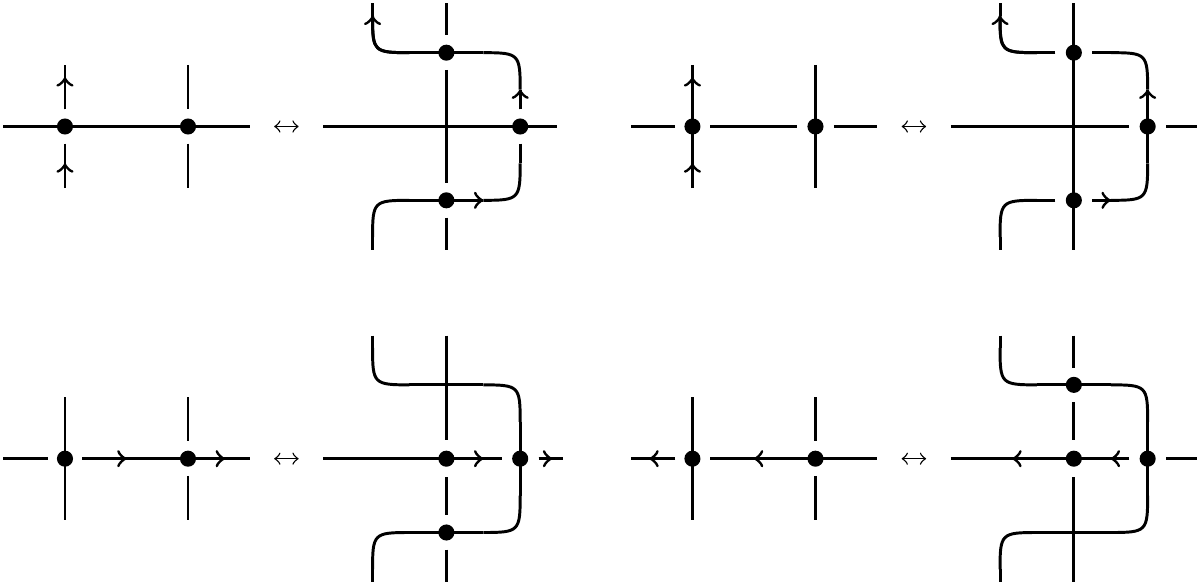}
  \caption{The MP-moves. The orientation of each non-oriented edge in the figure is arbitrarily if it matches before and after the move.}
  \label{fig:MP}
\end{figure}

\begin{figure}
  \centering
  \includegraphics{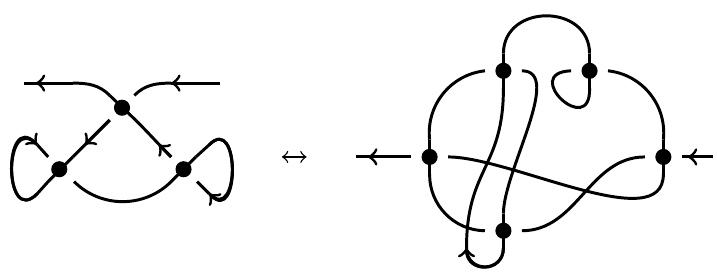}
  \caption{The CP-move.}
  \label{fig:CP}
\end{figure}

All of the above local moves preserve the axioms of closed normal o-graphs.
We say that two closed normal o-graphs are equivalent if one can be obtained from the other by planer isotopy and finite sequence of moves defined above.
Let us denote this equivalence relation by $\sim $.
The following was proved in \cite{BP}.

\begin{prop}
%Let $\mathcal{G}$ and $\mathcal{M}$ be the sets stated above and $\Phi$ the reconstruction map.
%Then,
The map
\begin{align}
\Phi\co \bigslant{\mathcal{G}}{\sim}\to \mathcal{M}
\end{align}
is well-defined and one to one.
%is one to one.
\label{thm:BP}
\end{prop}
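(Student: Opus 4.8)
The statement to prove is Proposition \ref{thm:BP}: the reconstruction map $\Phi\colon \mathcal{G}/{\sim}\to\mathcal{M}$ is well-defined and bijective. Since this is attributed to Benedetti--Petronio, the plan is essentially to explain how their results assemble into this statement, rather than to reprove the branched-spine theory from scratch. The plan is to break the claim into three parts: (a) $\Phi$ is well-defined on $\mathcal{G}$, i.e. every closed normal o-graph reconstructs to a genuine closed oriented $3$-manifold; (b) $\Phi$ descends to $\mathcal{G}/{\sim}$, i.e. each of the listed moves (Reidemeister-type, $0$-$2$, MP, CP) leaves the homeomorphism type unchanged; and (c) $\Phi$ is a bijection onto $\mathcal{M}$.

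For part (a) I would argue as follows. A closed normal o-graph $\Gamma$ is, by construction, the dual $1$-skeleton datum of a branched ideal triangulation: each $4$-valent vertex with its sign and the compatible edge-orientations encodes a branched ideal tetrahedron (a total order on its four ideal vertices, with the sign recording the orientation relative to the fixed orientation of $\mathbb{R}^3$), and the edges encode the face-pairings, glued by the orientation-reversing, order-preserving simplicial maps as in Figure \ref{fig:reconstruction map}. Condition \textbf{N2} guarantees that the face-pairings are consistent with the branchings; conditions \textbf{C1}, \textbf{C2}, \textbf{C3} are exactly the combinatorial constraints (on the link of the $0$-skeleton, i.e. on the boundary of a regular neighbourhood of the singular locus of the dual spine) ensuring that the resulting pseudo-manifold is a manifold away from finitely many points, each with an $S^2$-link; here I would cite the relevant figures and pages of \cite{BP} for the precise correspondence. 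Capping those $S^2$ boundary components with balls then yields a genuine closed oriented $3$-manifold, and the orientation is well-defined because all gluings are orientation-reversing. This gives a set-theoretic map $\Phi\colon\mathcal{G}\to\mathcal{M}$.

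For part (b), each move is local and I would treat them one at a time. The Reidemeister-type moves and the $0$-$2$ move correspond to subdividing/merging in a way that does not change the underlying triangulated space up to the allowed ideal-triangulation equivalence, hence fix $\Phi$; the MP-moves realize the $2$-$3$ Pachner move equipped with compatible branchings (two tetrahedra glued along a face $\leftrightarrow$ three tetrahedra around an edge), and Pachner's theorem ensures the PL-homeomorphism type is unchanged; the CP-move is Benedetti--Petronio's ``change of combing'' move, which again does not alter the manifold but only the auxiliary combing data. In each case the verification is the local picture plus the observation that capping commutes with the local modification. Again the cleanest route is to invoke \cite{BP} for the statement that these are precisely the moves relating branched ideal triangulations of a fixed closed oriented $3$-manifold.

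For part (c), surjectivity follows because every closed oriented $3$-manifold admits an ideal triangulation (remove an open ball, triangulate, then the one-vertex-on-the-boundary picture gives the ideal triangulation) which can be given a branching after finitely many subdivisions, and any branched ideal triangulation is encoded by some $\Gamma\in\mathcal{G}$; hence $\Phi$ hits every class. Injectivity is the substantive direction and is the main obstacle: one must show that if $\Phi(\Gamma_1)\cong\Phi(\Gamma_2)$ then $\Gamma_1\sim\Gamma_2$. This is the heart of the Benedetti--Petronio calculus: any two branched ideal triangulations of homeomorphic closed oriented $3$-manifolds are connected by a finite sequence of branched Pachner moves together with moves adjusting the branching/combing, and these translate exactly into the MP-, $0$-$2$-, CP- and Reidemeister-type moves on o-graphs. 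I would state this as the key input quoted from \cite{BP} and not reprove it; the role of conditions \textbf{C2}, \textbf{C3} is precisely to pin down which combinatorial o-graphs arise and to make the move-calculus closed within $\mathcal{G}$. Combining (a), (b), (c) gives that $\Phi$ is a well-defined bijection, as claimed.
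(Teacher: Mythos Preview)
The paper does not actually give a proof of this proposition: it is stated immediately after the sentence ``The following was proved in \cite{BP}'' and no argument is supplied. Your proposal, which outlines how the Benedetti--Petronio branched-spine calculus assembles into well-definedness, surjectivity, and injectivity of $\Phi$, is therefore considerably more detailed than what the paper itself contains. The outline you give is a reasonable summary of the content of \cite{BP}, and your acknowledgement that the injectivity direction is the substantive one (and that you would quote it rather than reprove it) is appropriate; there is nothing to compare beyond noting that the paper treats the result as a black-box citation while you have unpacked it.
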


\begin{ex}
The closed normal o-graph of the lens space $L(p,1)$, $p\geq 1$, is given by the following graph with $p$ vertices.
\begin{figure}[H]
  \centering
  \includegraphics{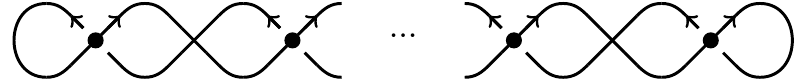}
  \label{lens_space}
\end{figure}
\end{ex}

\begin{rem}\label{branch}
We briefly remark on the geometrical meaning of the representation of $3$-manifolds by closed normal o-graphs; see \cite{BP} for more details. 
The order of vertices of ideal tetrahedra as in Figure \ref{fig:reconstruction map} specifies a branching structure for the ideal triangulation, which gives a combing, i.e., a non-vanishing vector field up to homotopy, to the underlying $3$-manifold.
The technical conditions \textbf{C1}, \textbf{C2} and \textbf{C3} ensure that the 3-manifold corresponding to a closed normal o-graph has a $S^2$ boundary with a nice branching structure, where the associated combing can be extended canonically to the closed $3$-manifold after we cap $B^3$ on the boundary. 
In this case, a closed normal o-graph up to the 0-2 move and the MP-moves represents a 3-manifolds with a combing.
The CP-move in Figure \ref{fig:CP} changes the combing while preserving the underline 3-manifold, thus one gets the complete representation of $\mathcal{M}$ as in Proposition \ref{thm:BP}.
Here, the CP-move is an interpretation of the Pontryagin surgery in terms of branched standard spine; see \cite{BP}*{Chap. 6} for details. 
\end{rem}

\section{Invariant}
\label{sec:ri}
\begin{figure}
  \centering
  \begin{minipage}{0.61\columnwidth}
    \centering
    \includegraphics{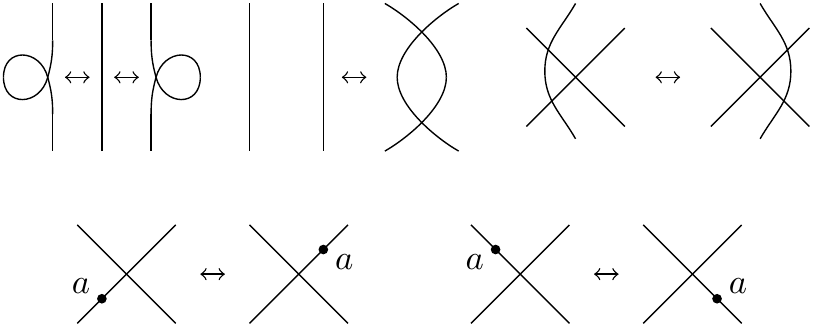}
    \caption{}
    \label{fig:crm}
  \end{minipage}
    \begin{minipage}{0.37\columnwidth}
    \centering
    \includegraphics{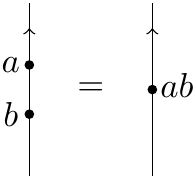}
    \caption{}
    \label{fig:bs}
  \end{minipage}
\end{figure}

\begin{figure}
  \centering
  \begin{minipage}{0.5\columnwidth}
    \centering
    \includegraphics{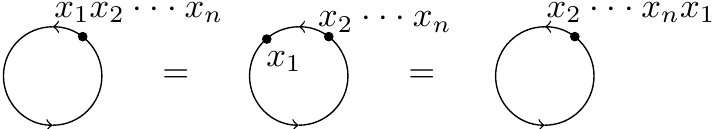}
    \caption{}
    \label{fig:bs2}
  \end{minipage}
\end{figure}

In this section,  we define a scalar $Z(\Gamma;\mathcal{H}(H))$ for a closed normal o-graph $\Gamma$.
In the following section, we show that this scalar is an invariant of  closed oriented 3-manifolds  when the Hopf algebra is involutory, unimodular and counimodular.

Let $A$ be a $\mathbb{K}$-algebra. \textit{$A$-decorated diagram} is an oriented closed curve immersed in $\mathbb{R}^2$ with the finite number of dots each of which is labeled with an element of $A$.
These dots are called beads.
We shall consider the $A$-decorated diagram up to planer isotopy and moves in Figure \ref{fig:crm} and \ref{fig:bs}.
We also allow the beads to slide along the curve.

We define the scalar $Z(\Gamma;\mathcal{H}(H))$ as follows.
Recall the definition of the canonical element $T$ and its inverse $\overline{T}$ given in (\ref{eq:can}) and (\ref{eq:can inv}).
Using the Sweedler notation, we write the canonical element as $T=T_1\otimes T_2\in\mathcal{H}(H)^{\otimes 2}$ and its inverse as $\overline{T}=\overline{T}_1\otimes \overline{T}_2\in\mathcal{H}(H)^{\otimes 2}$.
Given a closed normal o-graph, we replace its vertices with the following diagram to get a $\mathcal{H}(H)$-colored diagram $C_{\Gamma}$.
\begin{figure}[H]
  \centering
  \includegraphics{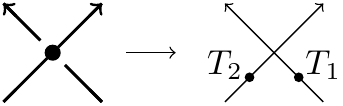}
  \hspace{8mm}
  \includegraphics{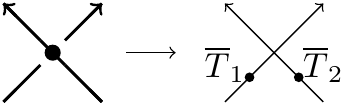}
  \caption{How to associate beads to vertices.}
  \label{fig:beads associated to vertices}
\end{figure}
\noindent
Since closed normal o-graph satisfies axiom \textbf{C1} in the definition \ref{def:bp}, we can perform the moves in Figure $\ref{fig:crm}$, $\ref{fig:bs}$ and slide beads on $C_{\Gamma}$ to get a closed circle with a bead $J_{\Gamma}$ in $\mathcal{H}(H)$.
Because one can permute the beads as in Figure \ref{fig:bs2}, $J_{\Gamma}$ is well-defined in the quotient space $\mathcal{H}(H)/[\mathcal{H}(H),\mathcal{H}(H)]$, which can be identified with $\mathbb{K}$ by Proposition \ref{prop:K}. 

\begin{defi}
 $Z(\Gamma ;\mathcal{H}(H)):=\raisebox{2pt}{$\chi$}_{Fock}(J_{\Gamma})$.
\end{defi}

\begin{ex}The invariant of $S^3=L(1,1)$:

\begin{figure}[H]
  \centering
  \includegraphics{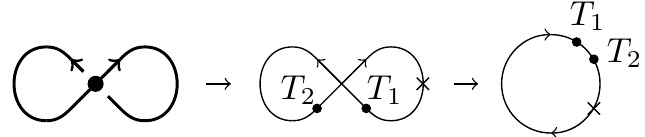}
\end{figure}
\noindent
Thus we have  $Z(S^3;\mathcal{H}(H))=\raisebox{2pt}{$\chi$}_{Fock}(T_2T_1)=\sum_i\raisebox{2pt}{$\chi$}_{Fock}(e^i\otimes e_i)$. By Proposition \ref{prop:fock is int}, we have
\begin{align*}
Z(S^3;\mathcal{H}(H))&=e^i(e_L)\mu_R(e_i)\\
&=\mu_R(e^i(e_L)e_i)\\
&=\mu_R(e_L)\\&=1.
\end{align*}
\end{ex}

\begin{ex}The invariant of $\mathbb{R}P^3=L(2,1)$:

\begin{figure}[H]
  \centering
  \includegraphics{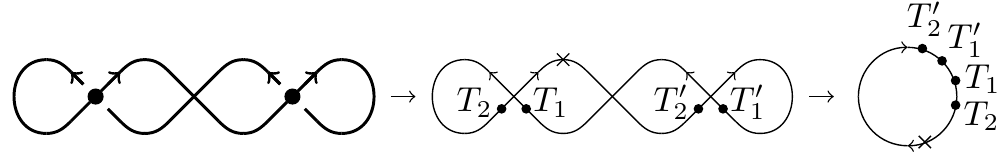}
\end{figure}
\noindent
Thus we have  $Z(\mathbb{R}P^3;\mathcal{H}(H))=\raisebox{2pt}{$\chi$}_{Fock}(T_2T_1T^{\prime}_1T^{\prime}_2)=\sum\limits_{i,j}\raisebox{2pt}{$\chi$}_{Fock}(e^i\cdot(e_{i(1)}e_{j(1)}\rightharpoonup e^j)\otimes e_{i(2)}e_{j(2)})=\mathrm{Tr}(S)$, where $S$ is the antipode.
\end{ex}

\section{Main theorem}
\label{sec:mt}
Recall from Section \ref{Hopf} that a Hopf algebra $H$ is called unimodular if the left cointegrals are also the right cointegrals, and counimodular if the left integrals are also the right integrals.
We prove the following main theorem.
\begin{thm} \label{mt}
Let $H$ be a finite dimensional involutory unimodular counimodular Hopf algebra over $\mathbb{K}$, and $\Gamma$ a closed normal o-graph of a closed oriented 3-manifold $M$.
Then, $Z(\Gamma,\mathcal{H}(H))$ is an invariant of $M$.
\end{thm}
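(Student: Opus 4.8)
The plan is to prove invariance by checking that the scalar $Z(\Gamma;\mathcal{H}(H))$ is unchanged under each of the local moves listed in Section~\ref{sec:ce}, since by Proposition~\ref{thm:BP} two closed normal o-graphs represent the same $3$-manifold if and only if they are related by a finite sequence of these moves. Concretely, one first observes that $Z$ is manifestly well-defined on an $\mathcal{H}(H)$-decorated diagram up to planar isotopy, the colored-curve moves of Figure~\ref{fig:crm}, and bead sliding (including the cyclic permutation of Figure~\ref{fig:bs2}), because it factors through $\chi_{Fock}$ on $\mathcal{H}(H)/[\mathcal{H}(H),\mathcal{H}(H)]\cong\mathbb{K}$ (Propositions~\ref{prop:K} and~\ref{prop:fock is int}). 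So the content is the behavior under the moves on $\Gamma$ itself: the Reidemeister-type moves of Figure~\ref{fig:RM}, the $0$-$2$ move, the MP-moves, and the CP-move.

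First I would treat the Reidemeister-type moves and the $0$-$2$ move. The Reidemeister moves that only involve virtual crossings, or that create/cancel a pair of real crossings of opposite sign, translate into the identities $T\overline{T}=\overline{T}T=1\otimes 1$ in $\mathcal{H}(H)^{\otimes 2}$, together with naturality of the diagrammatic calculus (beads sliding through virtual crossings); these are immediate from~(\ref{eq:can}) and~(\ref{eq:can inv}). The $0$-$2$ move similarly reduces to a cancellation $T\overline{T}=1$ after arranging the beads along the strands, so no hypothesis on $H$ beyond finite-dimensionality is needed here. For the MP-moves, the key point is that an MP-move is a branched Pachner $2$-$3$ move, and reading the canonical elements along the strands before and after the move yields exactly the two sides of a \emph{modified} pentagon equation: the genuine pentagon equation $T_{12}T_{13}T_{23}=T_{23}T_{12}$ of Proposition~\ref{pentagon}, conjugated or corrected by antipode factors coming from the edge orientations in Figure~\ref{fig:MP}. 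Here one uses $S^2=\mathrm{id}_H$ to cancel the antipode corrections — this is precisely where involutivity enters — so each MP-move reduces to Proposition~\ref{pentagon} after a bookkeeping of orientations.

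The CP-move is the main obstacle, and I would defer it. The two sides of the CP-move do \emph{not} reduce to the pentagon equation; instead the move changes the combing (see Remark~\ref{branch}), and the corresponding algebraic identity is a global statement about contracting the network of canonical elements, which is most cleanly handled by reformulating $Z(\Gamma;\mathcal{H}(H))$ as a tensor network built from the structure constants of $H$ and its cointegral/integral (the Fock-space trace of Proposition~\ref{prop:fock is int}). In that language the CP-invariance becomes an identity among contractions of the multiplication, comultiplication, and integral tensors, and verifying it requires the unimodularity and counimodularity of $H$ (to move the integral/cointegral past (co)multiplications symmetrically) in addition to involutivity. I expect this to occupy a separate section (as the excerpt indicates, Section~\ref{sec:tnm}), and it is the genuinely nontrivial step; the remaining moves are essentially formal consequences of $T\overline{T}=1$ and the pentagon equation. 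Assembling all of these — $Z$ is unchanged under every generating move — gives that $Z(\Gamma;\mathcal{H}(H))$ depends only on the class of $\Gamma$ in $\mathcal{G}/{\sim}$, hence only on $M=\Phi(\Gamma)$, which is the assertion of Theorem~\ref{mt}.
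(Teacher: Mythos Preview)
Your overall strategy matches the paper's: check each generating move, reduce the MP-moves to the pentagon equation (with antipode corrections handled via $S^2=\mathrm{id}$), and defer the CP-move to the tensor-network reformulation where unimodularity and counimodularity enter. That part is fine.

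There is, however, a concrete error in your treatment of the $0$-$2$ move. You claim it ``reduces to a cancellation $T\overline{T}=1$'' and that ``no hypothesis on $H$ beyond finite-dimensionality is needed here.'' This is not what happens. If you trace the strands in Figure~\ref{fig:02}, the four beads $T_1,T_2,\overline{T}_1,\overline{T}_2$ all land on a \emph{single} strand, and the product you must show is trivial is
\[
T_2\,\overline{T}_2\,\overline{T}_1\,T_1 \;=\; \sum_{i,j} e^i e^j \otimes S(e_j)e_i \;\in\; \mathcal{H}(H),
\]
not $T\overline{T}$ in $\mathcal{H}(H)^{\otimes 2}$. Under the identification $H^*\otimes H\cong\End(H)$ this is the map $x\mapsto S(x_{(2)})x_{(1)}$, which equals $\epsilon(x)1$ only when $S^2=\mathrm{id}$; for a general Hopf algebra the antipode axiom gives $S(x_{(1)})x_{(2)}=\epsilon(x)1$, not this twisted version. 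So involutivity is already needed for the $0$-$2$ move, exactly as the paper records (see Remark~5.1). Relatedly, the Reidemeister-type moves in Figure~\ref{fig:RM} are purely virtual and carry no beads at all---they do not ``create/cancel a pair of real crossings'' and do not invoke $T\overline{T}=1$; the paper disposes of them as obvious for that reason.
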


According to Proposition \ref{thm:BP}, in order to prove that $Z(\Gamma,\mathcal{H}(H))$ is an invariant, we need to show that $Z(\Gamma,\mathcal{H}(H))$ is an invariant under planer isotopy and the local moves (the Reidemeister type moves, the 0-2 move, the MP-moves, and the CP-move) of closed normal o-graph described in Figures \ref{fig:RM} to \ref{fig:CP}.
In \cite{S}, it was essentially proved that the value  $Z(\Gamma,\mathcal{H}(H))$ is an invariant under planer isotopy, the Reidemeister type moves, the 0-2 move, and the MP-moves. Here the Reidemeister type moves are nothing but the \textit{symmetry moves} in \cite{S}, the 0-2 move is a special case of the \textit{colored (0,2) moves},  and the MP-moves are obtained by the \textit{colored Pachner (2,3) moves} and the colored (0,2) moves. Even so, since the frameworks are slightly different, we give another proof in this section. The invariance under the CP-move was not observed in \cite{S}, and we give the proof in the following section, where we use a framework of tensor networks.

\begin{proof} [Proof of Theorem \ref{mt}]
We prove that $Z(\Gamma,\mathcal{H}(H))$ is an invariant under planer isotopy and the Reidemeister type moves, the 0-2 move, and the MP-moves. The proof of the invariance under the CP-moves will be shown in Section \ref{sec:tnm}.
\\
\newline
\textbf{Invariance under planer isotopy and the Reidemeister type moves}
It is obvious from the construction that $Z(\Gamma,\mathcal{H}(H))$ is an invariant under planer isotopy and the Reidemeister type moves described in the Figure \ref{fig:RM}.
\\
\newline
\textbf{Invariance under the 0-2 move.}
Let us calculate the local tensor associated to the right hand side of Figure \ref{fig:02}.
\begin{figure}[H]
  \centering
  \includegraphics{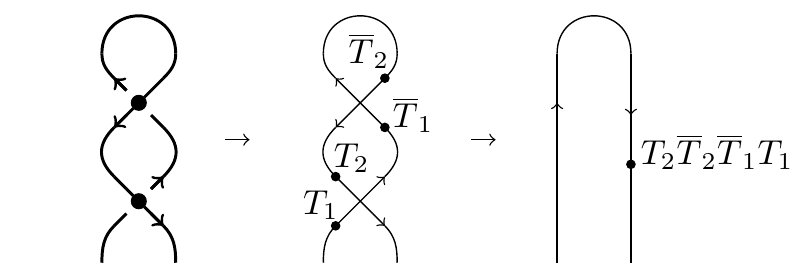}
\end{figure}
\noindent
Thus, we need to show that $T_2\overline{T}_2\overline{T}_1T_1=\epsilon \otimes 1$.
Let $T=\sum\limits_{i}(\epsilon \otimes e_i)\otimes (e^i \otimes 1)$ and $\overline{T}=\sum\limits_{j}(\epsilon \otimes S(e_j))\otimes (e^j \otimes 1)$.
Then,
\begin{align}
T_2\overline{T}_2\overline{T}_1T_1 &= \sum\limits_{i,j}(e^i \otimes 1)(e^j \otimes 1)(\epsilon \otimes S(e_j))(\epsilon \otimes e_i) \\
&=\sum\limits_{i,j}e^ie^j \otimes S(e_j)e_i \label{eq:0-2}
\end{align}
Identifying $H^*\otimes H$ with $\End(H)$, (\ref{eq:0-2}) becomes $x \mapsto S(x_{(2)})x_{(1)}$.
Since $S$ is assumed to be involutive, we have $S(x_{(2)})x_{(1)}=\epsilon(x)1$ .
Thus we have the assertion $T_2\overline{T}_2\overline{T}_1T_1=\epsilon \otimes 1$.
\\
\newline
\textbf{Invariance under the MP moves.}
There are 16 MP moves. Let us write all the calculations down.
\begin{figure}[H]
  \centering
  \includegraphics{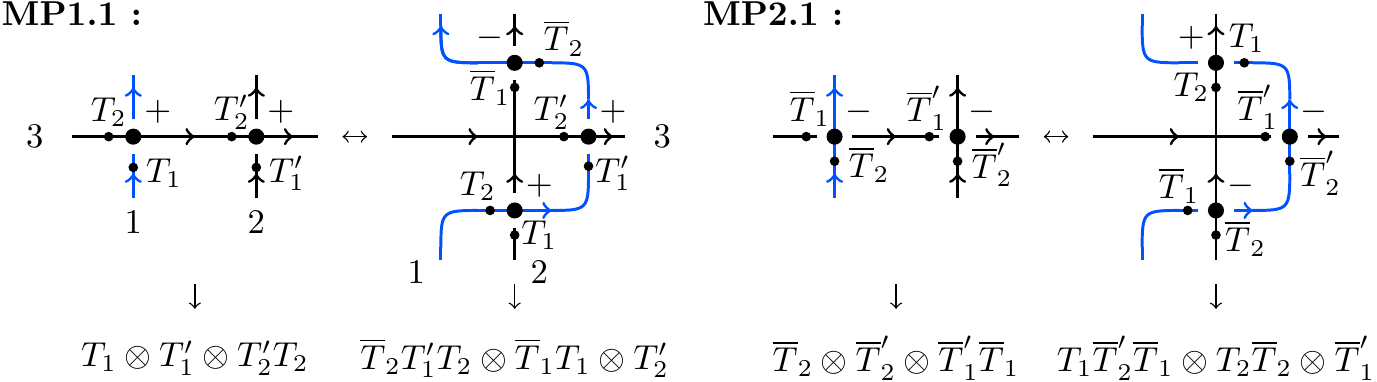}
\end{figure}
\begin{figure}[H]
  \centering
  \includegraphics{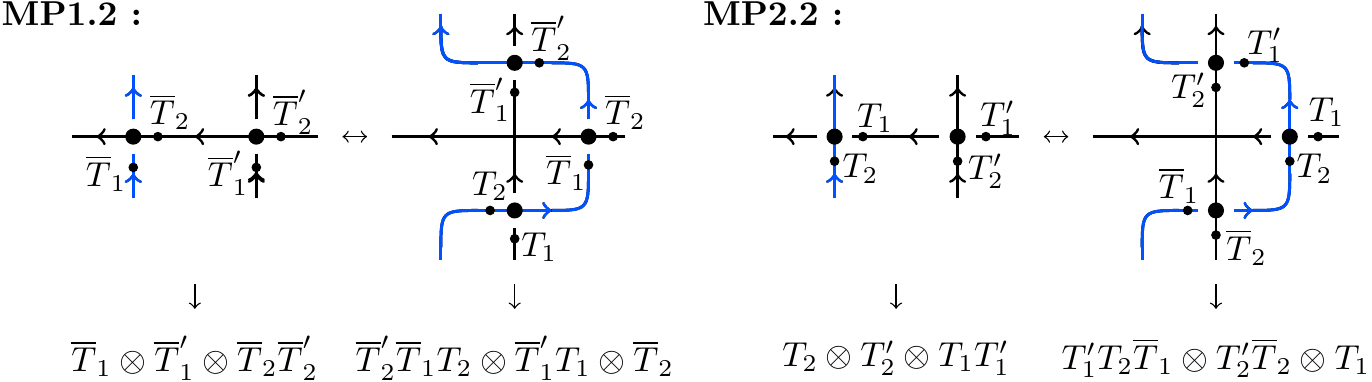}
\end{figure}
\begin{figure}[H]
  \centering
  \includegraphics{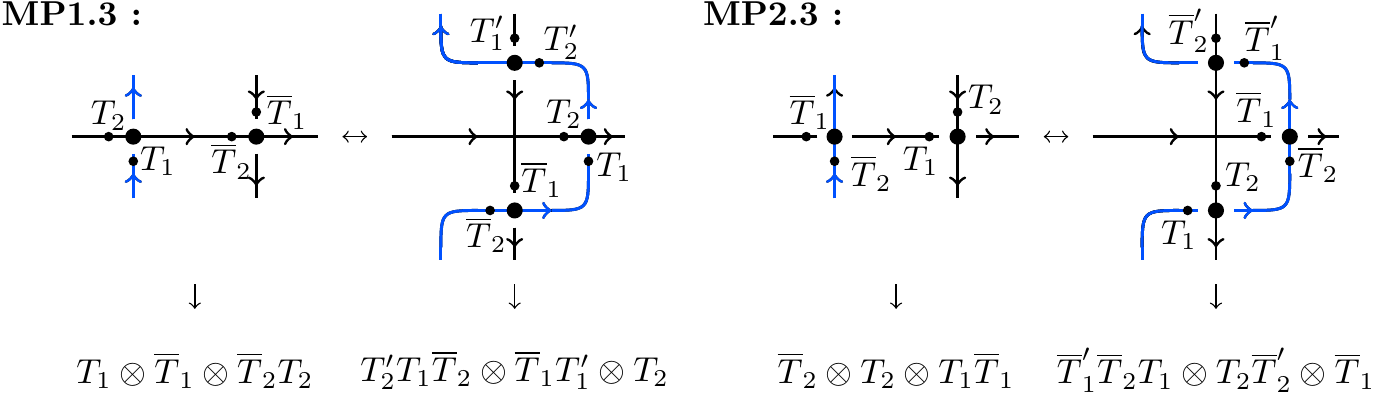}
\end{figure}
\begin{figure}[H]
  \centering
  \includegraphics{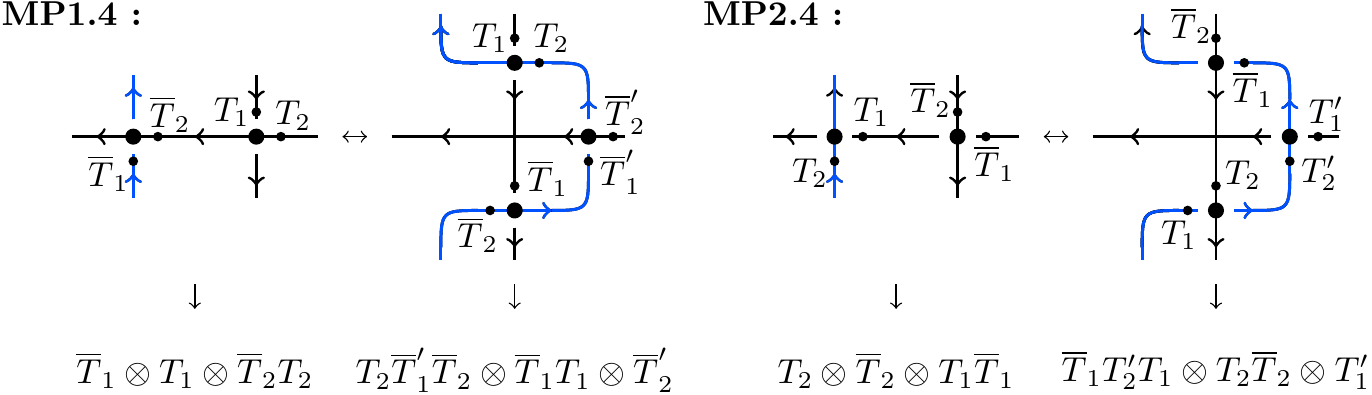}
\end{figure}
\begin{figure}[H]
  \centering
  \includegraphics{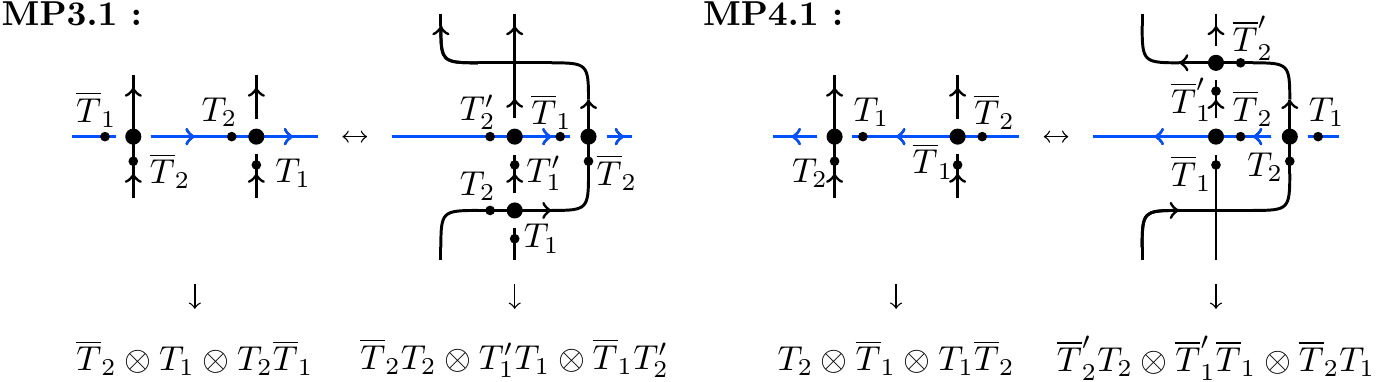}
\end{figure}
\begin{figure}[H]
  \centering
  \includegraphics{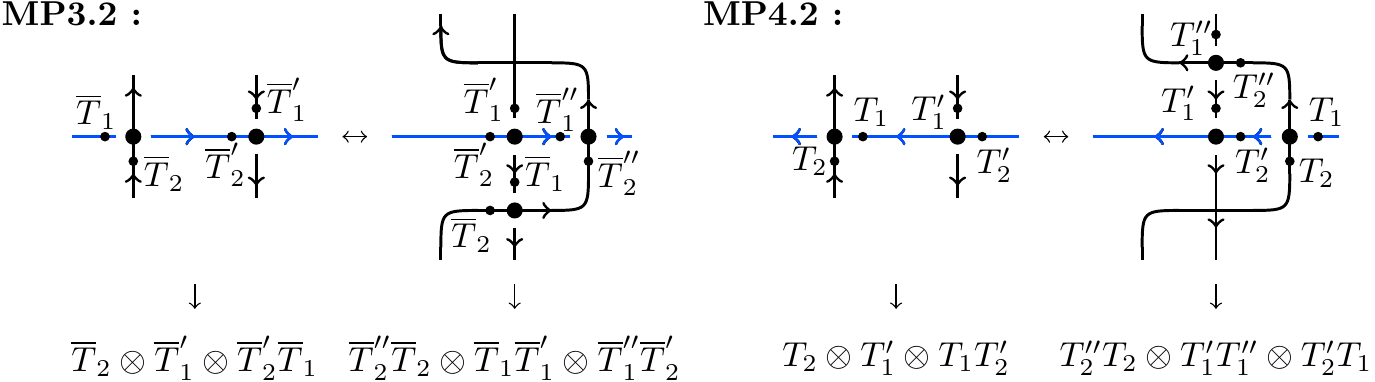}
\end{figure}
\begin{figure}[H]
  \centering
  \includegraphics{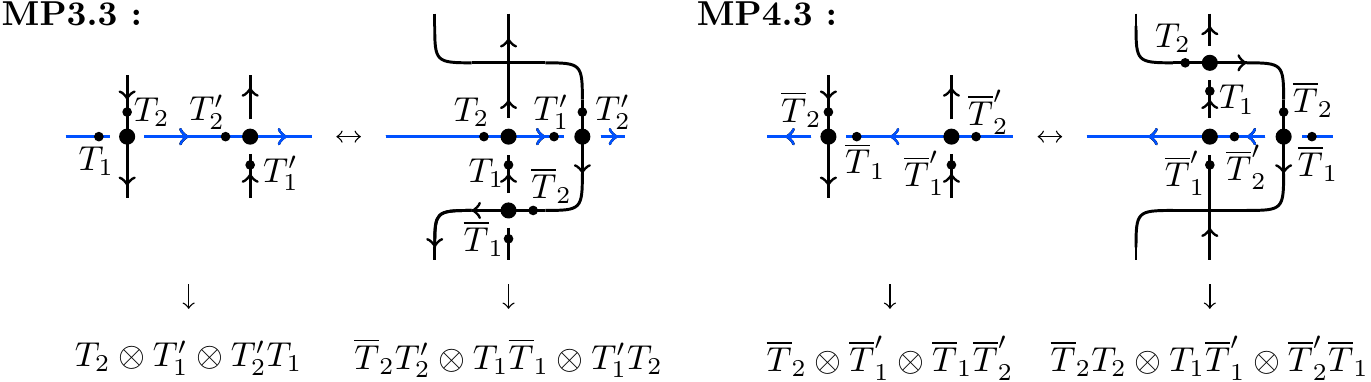}
\end{figure}
\begin{figure}[H]
  \centering
  \includegraphics{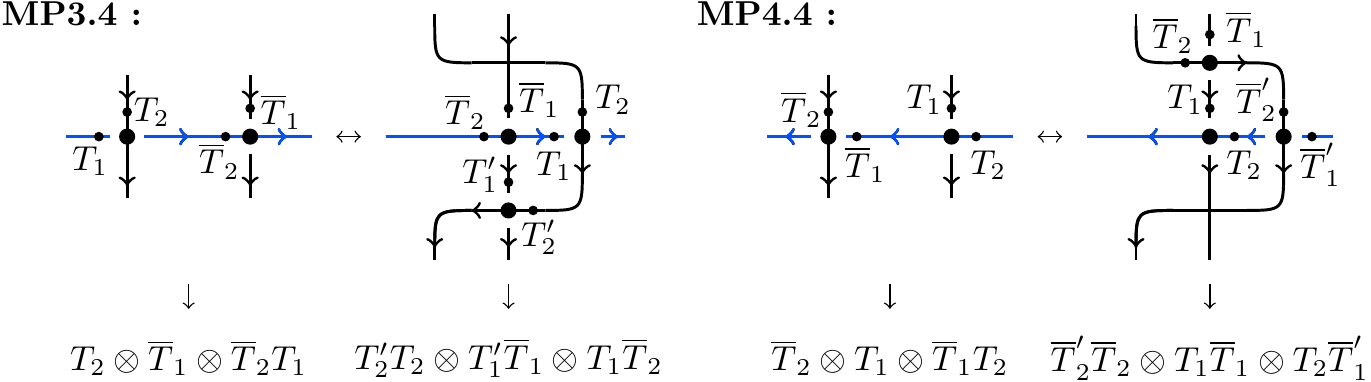}
\end{figure}
%For integers $i,j\in \{1,2,3\}$ with $i\neq j$, let $T_{ij}\in\mathcal{H}(H)^{\otimes 3}$ be the tensor with $T_1$ in the $i$-th tensorand, $T_2$ in the $j$-th tensorand, and $1_{\mathcal{H}(H)}$ in the other tensorand.
%For example, $T_{31}=T_2\otimes 1_{\mathcal{H}(H)}\otimes T_1$.
%We use the same notation for $\overline{T}$.
%With this notation, we get the following 16 equalities that
Namely, we need to check the following 16 equalities:
\setlength{\itemsep}{2mm}
\setlength{\parskip}{2mm} 
\begin{align*}
\textbf{MP1.1 :}& \quad T_{23}T_{13} = \overline{T}_{21}T_{13}T_{21} &
\textbf{MP2.1 :}& \quad \overline{T}_{32}\overline{T}_{31} = T_{12}\overline{T}_{31}\overline{T}_{12}\\
\textbf{MP1.2 :}& \quad \overline{T}_{13}\overline{T}_{23} = \overline{T}_{21}\overline{T}_{13}T_{21} &
\textbf{MP2.2 :}& \quad T_{31}T_{32} = T_{12}T_{31}\overline{T}_{12}\\
\textbf{MP1.3 :}& \quad \overline{T}_{23}T_{13} = T_2^{\prime}T_1\overline{T}_2 \otimes \overline{T}_1T_1^{\prime}\otimes T_2 & 
\textbf{MP2.3 :}& \quad T_{32}\overline{T}_{31} = \overline{T}_1^{\prime}\overline{T}_2T_1\otimes T_2\overline{T}_2^{\prime}\otimes \overline{T}_1\\
\textbf{MP1.4 :}& \quad \overline{T}_{13}T_{23} = T_2\overline{T}_1^{\prime}\overline{T}_2 \otimes \overline{T}_1T_1\otimes \overline{T}_2^{\prime} &
\textbf{MP2.4 :}& \quad T_{31}\overline{T}_{32} = \overline{T}_1T_2^{\prime}T_1\otimes T_2\overline{T}_2\otimes T_1^{\prime}\\
\\
\textbf{MP3.1 :}& \quad T_{23}\overline{T}_{31} = \overline{T}_{31}T_{23}T_{21} & 
\textbf{MP4.1 :}& \quad T_{31}\overline{T}_{23} = \overline{T}_{21}\overline{T}_{23}T_{31}\\
\textbf{MP3.2 :}& \quad \overline{T}_{23}\overline{T}_{31} = \overline{T}_{31}\overline{T}_{21}\overline{T}_{23} &
\textbf{MP4.2 :}& \quad T_{31}T_{23} = T_{23}T_{21}T_{31}\\
\textbf{MP3.3 :}& \quad T_{23}T_{31} = \overline{T}_2T_2^{\prime}\otimes T_1\overline{T}_1\otimes T_1^{\prime}T_2 &
\textbf{MP4.3 :}& \quad \overline{T}_{31}\overline{T}_{23} = \overline{T}_2T_2\otimes T_1\overline{T}_1^{\prime}\otimes \overline{T}_2^{\prime}\overline{T}_1\\
\textbf{MP3.4 :}& \quad \overline{T}_{23}\overline{T}_{31} = T_{21}T_{31}\overline{T}_{23} & 
\textbf{MP4.4 :}& \quad \overline{T}_{31}T_{23} = T_{23}\overline{T}_{31}\overline{T}_{21}
\end{align*}

Let us see that each equation above is equivalent to the pentagon equation (\ref{eq:pentagon equation}).
Define the map $\tau_{\mathcal{H}}:\mathcal{H}(H)^{\otimes 2}\to\mathcal{H}(H)^{\otimes 2}$ by $\tau_{\mathcal{H}}(x\otimes y)=y\otimes x$ for $x, y\in\mathcal{H}(H)$.
Then, for example, if we multiply $T_{21}$ on \textbf{MP1.1} from left and then apply $\tau_{\mathcal{H}}\otimes \text{id}$, one sees that the result is exactly the pentagon equation.
Similarly, we can reduce \textbf{MP1.2},\textbf{MP2.1},\textbf{MP2.2},\textbf{MP3.1},\textbf{MP3.2},\textbf{MP3.4},\textbf{MP4.1},\textbf{MP4.2}, and \textbf{MP4.4} to the pentagon equation.

Let us see the other 6 equalities.
Define the map $\mathcal{S}:\mathcal{H}(H)\to \mathcal{H}(H)$ by $\mathcal{S}(f\otimes a)=S(f)\otimes S(x)$, where $S$ is the antipode of the Hopf algebra $H$.
Then, for example, we can transform \textbf{MP1.3} into \textbf{MP1.1} by applying $\text{id}\otimes\mathcal{S}\otimes\text{id}$ to the both sides  as follows.
\begin{align*}
(\text{id}\otimes\mathcal{S}\otimes\text{id})(\overline{T}_{23}T_{13}) &= (\text{id}\otimes\mathcal{S}\otimes\text{id})(T_2^{\prime}T_1\overline{T}_2 \otimes \overline{T}_1T_1^{\prime}\otimes T_2)\\
\Leftrightarrow\quad\sum\limits_{i,j}(\epsilon\otimes e_j)\otimes(\epsilon\otimes S^2(e_i))\otimes(e^ie^j\otimes 1)&=\sum\limits_{i,j,k}(e^j(e_{(1)}\rightharpoonup e^k)\otimes e_{(2)})\otimes (\epsilon\otimes S(e_j)S^2(e_k))\otimes(e^i\otimes 1)\\
\Leftrightarrow\quad T_{23}T_{13} &= \overline{T}_{21}T_{13}T_{21},
\end{align*}
where we used the involutivity of $S$ for the last equivalence. Thus \textbf{MP1.3} is also equivalent to pentagon equation. We can show that 
\textbf{MP1.4}, \textbf{MP2.3}, \textbf{MP2.4}, \textbf{MP3.3}, \textbf{MP4.3} are also equivalent to the pentagon equation in similar manners.

Since the pentagon equation holds in the Heisenberg double $\mathcal{H}(H)$, we conclude that $Z$ is an invariant under the MP-moves. 
\\
\newline
\textbf{Invariance under the CP-move.}
This will be proved in Proposition \ref{prop:CP} using tensor network.
\end{proof}

\begin{rem}
The assumption of unimodularity and counimodularity of $H$ will be used only for invariance of the CP-move.
Involutivity was used for the 0-2 move and the MP-moves \textbf{MP1.3}, \textbf{MP1.4}, \textbf{MP2.3}, \textbf{MP2.4}, \textbf{MP3.3} and \textbf{MP4.3}.
The other 10 equalities for the MP-moves do not need any restriction on Hopf algebras to hold.
\end{rem}
\begin{rem}
In \cite{S}*{Theorem 5.1}, there is an error; even for an involutory Hopf algebra, $J$ is not an invariant under the colored Pachner (2,3) move in \cite{S}*{Figure 16}, which cannot be obtained by rotating the allowed one. This excluded move corresponds to the MP-moves with some strands reversed, which are not actually the MP-moves.
\end{rem}

\section{Tensor network approach}
\label{sec:tnm}
In this section, we give a quick review of tensor networks which enable graphical calculus of tensors and linear maps. Then we reformulate the invariant using tensor networks, and prove  the invariance under the CP-move.
\subsection{Tensor network}
\label{ssec:tn}
A \textit{tensor network} over a vector space $V$ is an oriented graph which represents a tensor labeled by the set of open edges, where each incoming (resp. outgoing) edge labels $V^*$ (resp. $V$).
For example, the diagram in Figure \ref{fig:T and F} presents an $(m,n)$ tensor $T\in  (V^*)^{\otimes\mathcal{I}}\otimes V^{\otimes \mathcal{O}}=\mathrm{Hom}(V^{\otimes\mathcal{I}}, V^{\otimes\mathcal{O}})$, where $\mathcal{I}=\{i_1,\ldots, i_m\}$ is the set of incoming edges and $\mathcal{O}=\{o_1,\ldots, o_n\}$ is the set of outgoing edges.

\begin{figure}[ht]
  \centering
  \includegraphics{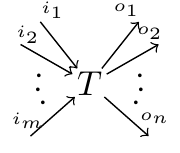}
  %\hspace{18mm}
  %\includegraphics{}
  \caption{}
  \label{fig:T and F}
\end{figure}

One important feature of tensor networks, which makes this notion practical, is the contraction of tensors.
Given two tensor networks $T$ and $S$, one gets a new tensor network by connecting an outgoing edge $o$ of $T$ and an incoming edge $i$ of $S$ (see Figure \ref{fig:contruction}), which represents the tensor  obtained from $T\otimes S$ by contracting $V_o$ and $(V^*)_i$.
\begin{figure}[ht]
  \centering
  \includegraphics{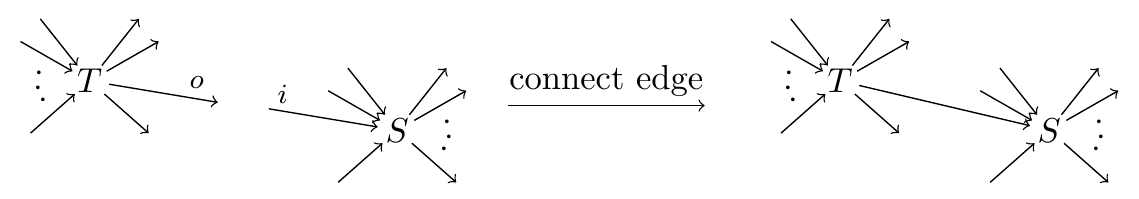}
  \caption{}
  \label{fig:contruction}
\end{figure}

For example, the left diagram in Figure \ref{fig:f and g} represents the composition $g\circ f$ of two maps $f, g\co V\to V$ and the right diagram represents the trace $\sum_{i}f^i_i = \text{tr}(f)
\in \mathbb{K}$ of a map $f\co V\to V$. 

\begin{figure}[H]
  \centering
  \includegraphics{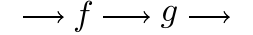}
  \hspace{10mm}
  \includegraphics{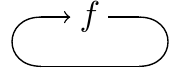}
  \caption{}
    \label{fig:f and g}
\end{figure} 

The axioms of Hopf algebras $(H, M, 1, \Delta, \epsilon, S)$ are represented as follows; 
\begin{figure}[H]
  \centering
  \includegraphics{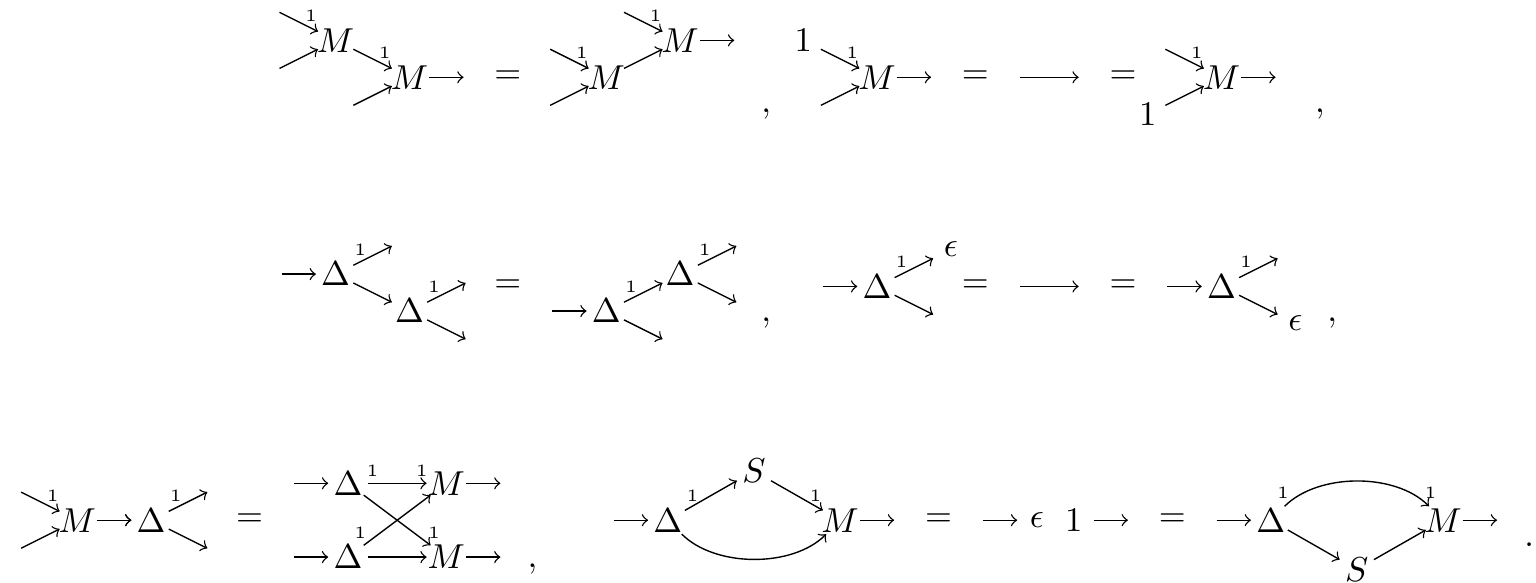}
\end{figure}
\noindent Here, we assume that the incoming edges (resp. outgoing edges) are counted in counter-clockwise
 (resp. clockwise) order from the edge subscripted by $1$, which labels $V^{\otimes m}$ (resp. $(V^*)^{\otimes m}$) in order from the left. 
In obvious cases, the subscript $1$ will be omitted.

\subsection{Reformulation of invariant}
\label{sec:IH}
We assume $H=(H, M, 1, \Delta, \epsilon, S)$ to be a finite dimensional involutory unimodular counimodular Hopf algebra.

An \textit{o-tangle} is an oriented virtual tangle diagram in $[0,1]^2$ such that each boundary point is on the bottom  $[0,1]\times \{0\}$ or on the top $[0,1]\times \{1\}$.
For finite sequences $\varepsilon,\varepsilon'$ of $\pm$, an $(\varepsilon,\varepsilon')$ o-tangle is an o-tangle having boundary points on the bottom and top compatible to $\varepsilon $ and $\varepsilon'$, respectively, where compatible means if an edge is oriented upwards (resp. downwards) then it is connected to $+$ (resp.  $-$). 

Let $\mathcal{T}_{o}$ be the \textit{category of o-tangles}, where objects are finite sequences of $\pm$ including the empty sequence $\emptyset$, and morphisms $\mathcal{T}_{o}(\varepsilon,\varepsilon')$ from  $\varepsilon$ to $\varepsilon'$ are isotopy classes of $(\varepsilon,\varepsilon')$ o-tangles.
As usual, $\mathcal{T}_{o}$ is a strict monoidal category with the unit object $\emptyset$, and the composition $\Gamma' \circ \Gamma$ of $(\varepsilon,\varepsilon')$ o-tangle $\Gamma$ and  $(\varepsilon',\varepsilon'')$ o-tangle $\Gamma'$ is obtained by connecting the $\varepsilon'$ type boundary points on the top of $\Gamma$ to these on the bottom of $\Gamma'$.
We can construct a monoidal functor $Z(\,*\,;H)$ from the category of o-tangle $\mathcal{T}_{o}$ to the category of finite dimensional vector spaces $\text{Vect}_{\mathbb{K}}$ as follows.

For the object $+$ (resp. $-$), we set $Z(+;H):=H^*$ (resp. $Z(-;H):=H$).
For a sequence $\varepsilon=(\varepsilon_1, \ldots,\varepsilon_n)$ in ${\pm}$, let $H^{\varepsilon}$ denote $H^{\varepsilon_1}\otimes \cdots \otimes  H^{\varepsilon_n}$ with $H^+=H^*$ and $H^-=H$. 
To a given $(\varepsilon,\varepsilon')$ o-tangle $\Gamma$, we associate a tensor network over $H$, which represents a linear map;
we replace each positive (resp. negative) crossing of $\Gamma$ with the tensor network as the left (resp. right) picture below, and then we obtain $Z(\Gamma;H)\in \mathrm{Hom}(H^{\varepsilon},H^{\varepsilon'})$ by connecting the boundary points of the above tensor networks following the strands of $\Gamma$. Note that the corresponding strands of $Z(\Gamma;H)$ are oriented in the opposite direction to these of $\Gamma$, and the linear map $Z(\Gamma;H)$ sends a tensor $T\in H^{\varepsilon}$ to the tensor $Z(\Gamma;H)(T)$ which is obtained by concatenating $T$ to the bottom of $Z(\Gamma;H)$.

\begin{figure}[H]
  \centering
  \includegraphics{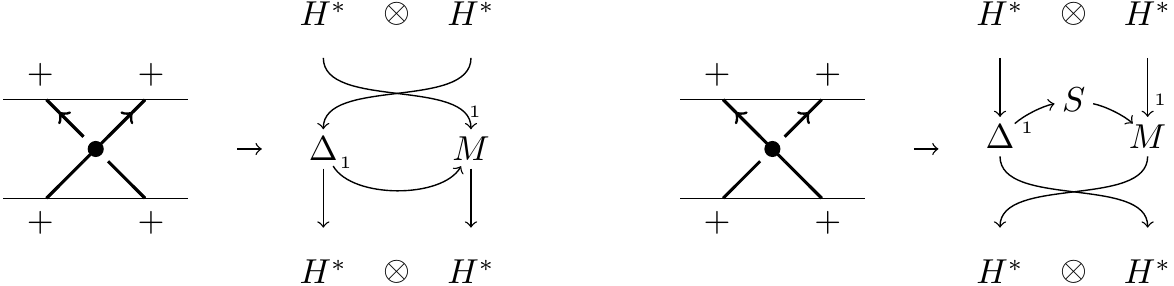}
\end{figure}

Note that a maximum point plays a role of evaluation map, and a minimum point plays a role of coevaluation map as below.

\begin{figure}[H]
    \centering
    \includegraphics{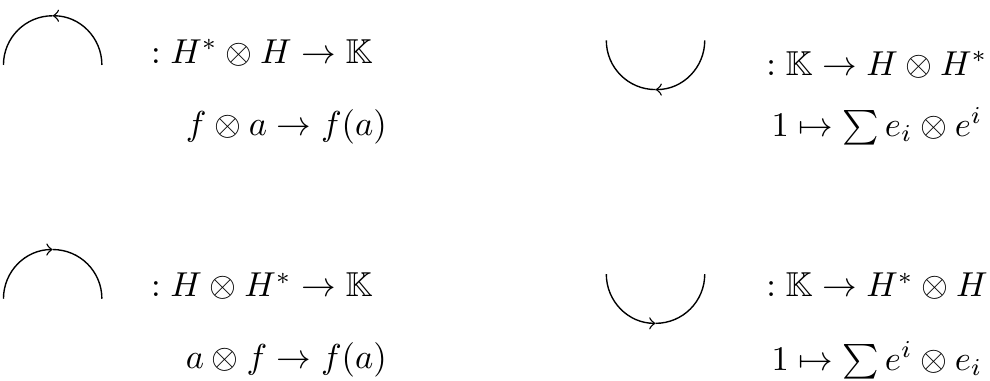}
\end{figure}

Since a closed normal o-graph $\Gamma$ is an $(\emptyset,\emptyset)$ o-tangle, it is sent to an endomorphism $Z(\Gamma;H)$ of $\mathbb{K}$, which is presented by a scalar in $\mathbb{K}$. By abusing the notation we also denote the scalar by $Z(\Gamma;H)\in \mathbb{K}$.

\begin{ex}
For a closed normal o-graph  $\Gamma$ for $S^3$, which is $(\emptyset,\emptyset)$ o-tangle as the left picture below, the resulting tensor network $Z(\Gamma;H)$ is the right picture below.
\begin{figure}[H]
  \centering
  \includegraphics{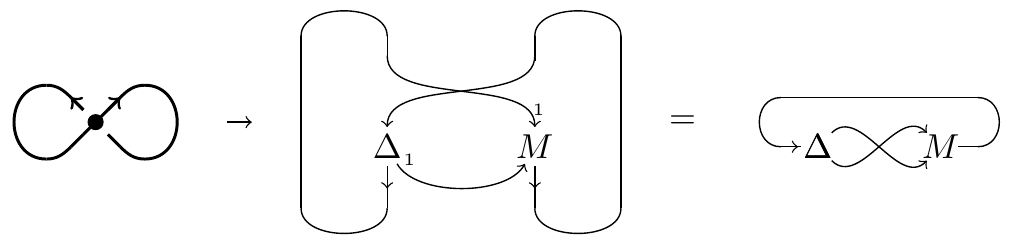}
\end{figure}
Thus we have $Z(\Gamma;H)=\text{tr}(M\circ\tau\circ\Delta)\in\mathbb{K}$, where $\tau(x\otimes y)=y\otimes x$.
\end{ex}
Let $\Gamma$ be a closed normal o-graph and $Z(\Gamma;\mathcal{H}(H))$ the invariant defined in Section \ref{sec:ri}.
\begin{prop}\label{prop:same invariants}
$Z(\Gamma;\mathcal{H}(H))=Z(\Gamma;H)$.
\end{prop}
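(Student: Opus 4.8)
\textbf{Proof strategy for Proposition \ref{prop:same invariants}.}

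The plan is to show that the two constructions compute the same scalar by tracking, crossing by crossing, how the Heisenberg-double bead calculus of Section \ref{sec:ri} is transported to the $H$-tensor-network calculus of Section \ref{ssec:tn}. The essential input is the isomorphism $\phi\co \mathcal{H}(H)\xrightarrow{\sim}\End(H^*)$ of Proposition \ref{prop:K}, together with the fact (also established there) that $\raisebox{2pt}{$\chi$}_{Fock}$ on $\mathcal{H}(H)/[\mathcal{H}(H),\mathcal{H}(H)]$ is the canonical matrix trace on $\End(H^*)/[\End(H^*),\End(H^*)]$ pulled back along $\phi$. So on the left-hand side, $Z(\Gamma;\mathcal{H}(H))=\raisebox{2pt}{$\chi$}_{Fock}(J_\Gamma)=\mathrm{tr}\bigl(\phi(J_\Gamma)\bigr)$, where $J_\Gamma\in\mathcal{H}(H)$ is the single bead obtained by reading the beaded circle $C_\Gamma$ along the unique circuit guaranteed by axiom \textbf{C1}. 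The goal is thus to identify $\mathrm{tr}(\phi(J_\Gamma))$ with the closed tensor network $Z(\Gamma;H)$.

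First I would record the image under $\phi$ of the two legs of the canonical element. From \eqref{eq:can} and \eqref{eq:fock} one computes directly that $\phi(T_1)\otimes\phi(T_2)$, viewed as a pair of operators on $H^*$ contracted appropriately, reproduces exactly the local tensor network assigned to a positive crossing in Figure (the "vertex\_tn" picture) --- indeed $\phi(\epsilon\otimes e_i)(g)=e_i\rightharpoonup g$ encodes a comultiplication-then-pairing, while $\phi(e^i\otimes 1)(g)=e^i\cdot g$ encodes a multiplication, and summing over $i$ glues the two through the identity on $H$; this is precisely the $(\Delta,M)$-pattern drawn for the crossing. The same computation with $\overline{T}$ from \eqref{eq:can inv} inserts one antipode and yields the negative-crossing tensor. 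This is the step where one must be a little careful about conventions: the order of tensor factors, the counter-clockwise/clockwise edge-ordering convention fixed in Section \ref{ssec:tn}, and the fact (noted in the text) that the strands of $Z(\Gamma;H)$ run opposite to those of $\Gamma$. I would check these against the two worked $S^3$ examples, where the left side gives $\raisebox{2pt}{$\chi$}_{Fock}(T_2T_1)=\sum_i\raisebox{2pt}{$\chi$}_{Fock}(e^i\otimes e_i)$ and the right side gives $\mathrm{tr}(M\circ\tau\circ\Delta)$; matching $\sum_i\mathrm{tr}(\phi(e^i\otimes e_i))$ with $\mathrm{tr}(M\circ\tau\circ\Delta)$ is a one-line verification that simultaneously fixes all sign and ordering conventions.

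Next, having matched local pieces, I would argue that the global contractions agree. On the $\mathcal{H}(H)$-side, the bead calculus (moves of Figures \ref{fig:crm}, \ref{fig:bs}, sliding beads, and the permutation move of Figure \ref{fig:bs2}) amounts to: multiply all the beads together along the single circuit, then apply $\raisebox{2pt}{$\chi$}_{Fock}$. Under $\phi$, multiplication of beads in $\mathcal{H}(H)$ becomes composition in $\End(H^*)$ (since $\phi$ is an algebra map), and "read along the unique circuit and close up" becomes "compose the local operators around the loop and take the trace" --- which is exactly how the closed tensor network $Z(\Gamma;H)$ is evaluated, because each maximum acts as an evaluation and each minimum as a coevaluation on $H^*$, i.e. $\mathrm{End}(H^*)$-trace. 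Concretely: $\phi(J_\Gamma)$ is the composition around the circuit of the local crossing-operators, so $\mathrm{tr}(\phi(J_\Gamma))$ is the corresponding closed network over $H^*$; and the crossing-operator identification from the previous paragraph shows this closed network over $H^*$ equals $Z(\Gamma;H)$ (the network over $H$, read with reversed orientation --- duality between $H$ and $H^*$ accounts for the reversal).

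\textbf{Main obstacle.} The only genuine difficulty is bookkeeping rather than mathematics: making the dictionary between the "beads on a circle" picture and the "tensor network" picture precise enough that the global claim is more than a hand-wave. In particular one must verify that the freedom used on the $\mathcal{H}(H)$-side --- cyclic reordering of beads, which is legitimate only after passing to the quotient by $[\mathcal{H}(H),\mathcal{H}(H)]$ --- corresponds exactly to cyclicity of the matrix trace on the $\End(H^*)$-side, and that the reconstruction of the single circuit from \textbf{C1} is routed the same way in both formalisms. I expect this to be handled cleanly by stating the crossing-by-crossing identification as the key lemma (with the $S^3$ example as the convention-fixing check) and then invoking functoriality: both $\Gamma\mapsto Z(\Gamma;\mathcal{H}(H))$ and $\Gamma\mapsto Z(\Gamma;H)$ are built by the same gluing recipe from local data that $\phi$ identifies, so they agree on all closed normal o-graphs.
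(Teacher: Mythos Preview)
Your proposal is correct and follows essentially the same route as the paper: translate each bead to its Fock-space operator via $\phi$, verify that at each crossing the resulting local operator coincides with the tensor network assigned to that crossing in the definition of $Z(\Gamma;H)$, and then use that $\raisebox{2pt}{$\chi$}_{Fock}=\mathrm{tr}\circ\phi$ turns the closed bead-circle into the closed tensor network. The paper organizes this slightly differently---it replaces each bead by its tensor network \emph{before} sliding them into a single bead $J_\Gamma$, and then compares vertex by vertex---but since $\phi$ is an algebra map the two orderings are equivalent, and your ``bookkeeping'' obstacle is exactly what the paper resolves by that vertex-picture computation.
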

\begin{proof}
First, let us consider an oriented strand with a bead $f\otimes a \in \mathcal{H}(H)$ as in the left picture below.
We will think of this strand with a bead as the action of $f\otimes a$ on the Fock space $F(H^*)$, which was given by $\phi(f\otimes a)\co g\mapsto f(a\rightharpoonup g)$ for $g \in H^*$, where $f(a\rightharpoonup g)\co x \mapsto f(x_{(1)})g(x_{(2)}a)$ for $x\in H$.
Graphically this map $\phi(f\otimes a)\in \Hom(H^*,H^*)$ can be represented by the tensor network in the right picture below.
\begin{figure}[H]
    \centering
    \includegraphics{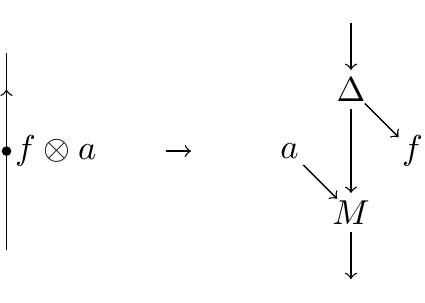}
\end{figure}
\noindent
If there are multiple beads, we replace each bead with the above tensor network. Since each map is an action, the result is well-defined under the move of Figure \ref{fig:bs}.

Then, let us consider the oriented closed curve with a bead $f\otimes a\in\mathcal{H}(H)$.
Recall that $\raisebox{2pt}{$\chi$}_{Fock}(f\otimes a)$ is the trace of the linear map defined by the action of $f\otimes a$.
As remarked in Section \ref{ssec:tn}, in terms of tensor network, taking a trace is just connecting the incoming edge with the outgoing edge.
Thus, we have the following.
\begin{figure}[H]
    \centering
    \includegraphics{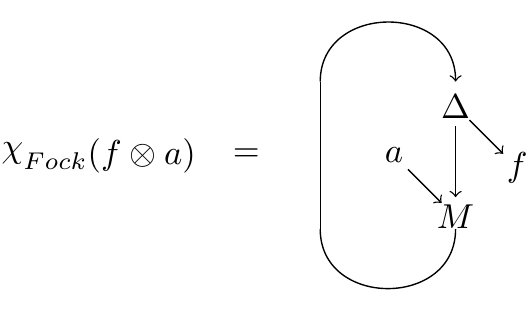}
\end{figure}

Finally, let $\Gamma$ be a closed normal o-graph.
Replacing its vertices as in Figure \ref{fig:beads associated to vertices} and sliding beads, we get a single bead $J_{\Gamma}\in\mathcal{H}(H)$, and the invariant $Z(\Gamma, \mathcal{H}(H))$ was defined as $\raisebox{2pt}{$\chi$}_{Fock}(J_{\Gamma})$.
Here, before the sliding process, we replace each bead with a corresponding tensor network as above, and compare the result to $Z(\Gamma;H)$.
Since the beads only appear at the vertices of $\Gamma$, we just need to look at the associated tensor network for these beads:
\begin{figure}[H]
    \centering
    \includegraphics{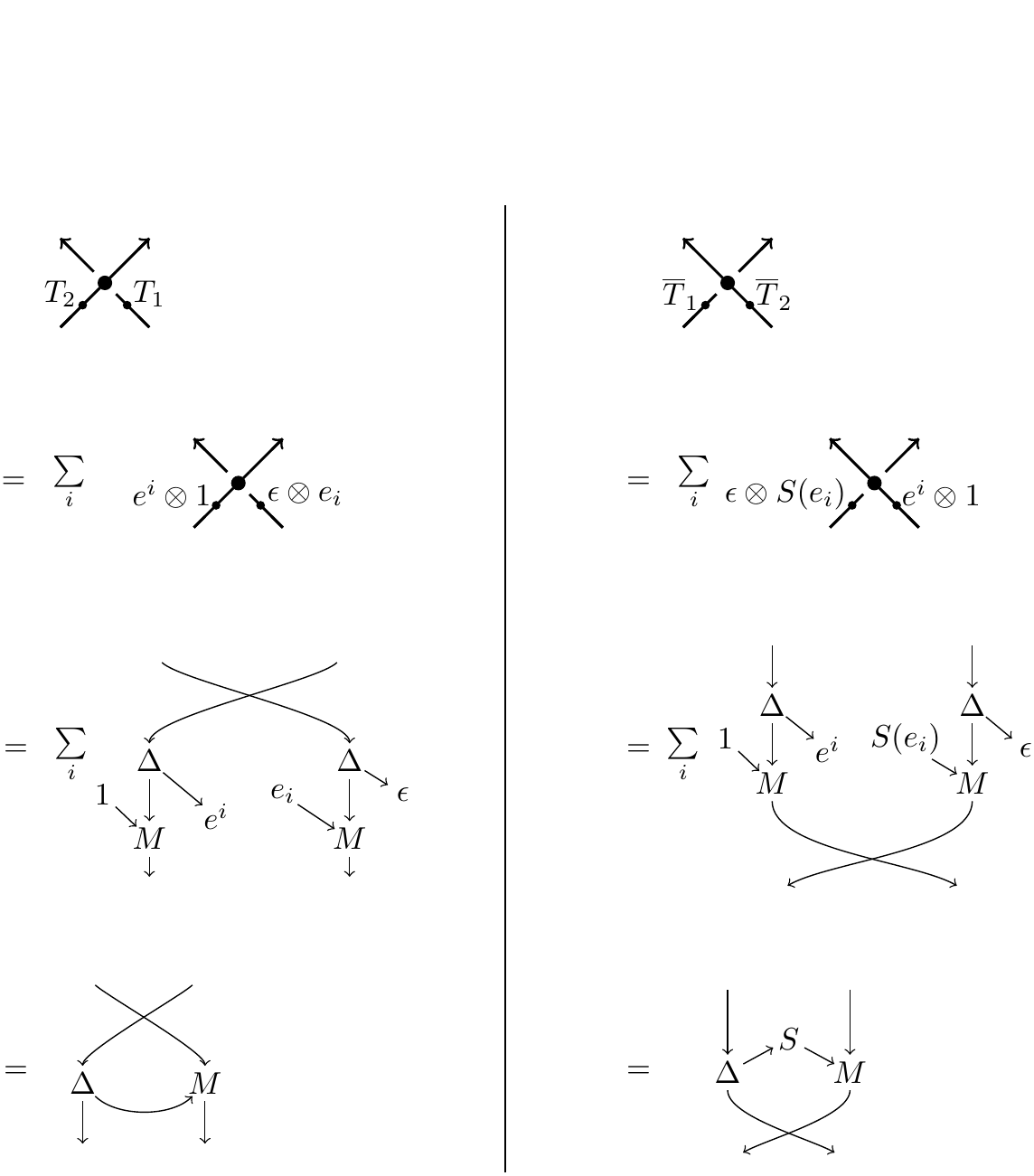}
\end{figure}
\noindent These are the same tensor networks associated with vertices in the definition of $Z(\Gamma;H)$.
Thus we have the assertion.
\end{proof}

\subsection{Invariance under CP-move}
We prove the invariance of $Z(\Gamma;H)$ under the CP-move.  Recall from  Section \ref{sec:hd} for the definition of integrals.
Let $e_R\in H$ and $\mu_L\in H^*$ be a left cointegral and a left integral satisfying $\mu_L(e_L)=1$. 
In terms of tensor networks, we have
\begin{figure}[H]
    \centering
    \includegraphics{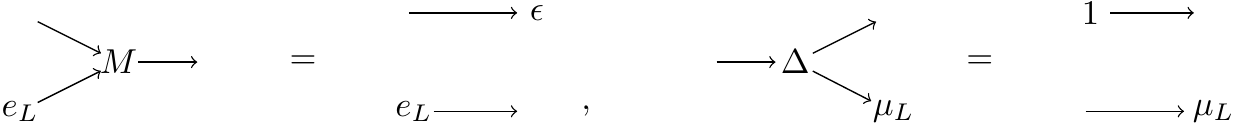}
\end{figure}

\begin{lem}The following equality holds in any Hopf algebra.
\begin{figure}[H]
    \centering
    \includegraphics{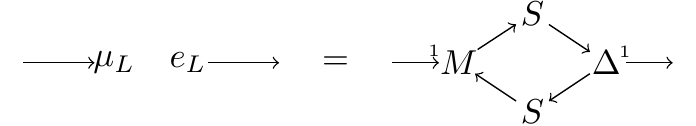}
\end{figure}
\label{lem:integral}
\end{lem}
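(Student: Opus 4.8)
\textbf{Proof proposal for Lemma \ref{lem:integral}.}

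The plan is to prove the stated tensor-network identity by unwinding both sides into Sweedler notation and reducing everything to the two defining properties of a (left) integral and a (left) cointegral, together with the antipode axioms. Concretely, the picture on each side is a composite of the structure maps $M$, $\Delta$, $\epsilon$, $S$ applied to an integral $\mu_L\in H^*$ and a cointegral $e_R\in H$ (or their translates $\mu_R$, $e_L$), so the first step is simply to read off, from the diagram, the linear map each side represents, writing it as an element of the appropriate $\Hom$-space in Sweedler notation. I would orient the reading so that the bead carrying $e_R$ gets comultiplied and the bead carrying $\mu_L$ gets paired against the relevant arguments.

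The key algebraic inputs are: (i) the cointegral identity $x\, e_R = \epsilon(x)\, e_R$ (and its left/right variants as needed), which lets one collapse a multiplication-into-$e_R$ into a counit; (ii) the dual statement for the integral, $\mu_L(x_{(1)})\,x_{(2)} = \mu_L(x)\,1$ (or the right-integral version $x_{(1)}\,\mu_R(x_{(2)}) = \mu_R(x)\,1$), which does the same on the dual side; (iii) the antipode axiom $x_{(1)}S(x_{(2)}) = \epsilon(x)1 = S(x_{(1)})x_{(2)}$ and, where the diagram forces it, the involutivity $S^2=\mathrm{id}$ assumed throughout. The plan is to apply these one at a time: first use the (co)integral property to kill one of the comultiplications against the integral bead, turning a branch of the diagram into a counit edge; this straightens part of the graph. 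Then use naturality of $\Delta$ and coassociativity to regroup the remaining strands, and apply the antipode axiom to cancel an $S$ against an adjacent copied strand. After at most two or three such reductions both sides should collapse to the same normal form — typically a single cup/cap decorated by one copy each of $e_R$ and $\mu_L$ — and the equality follows.

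I would carry this out by first massaging the left-hand diagram to this normal form, then independently massaging the right-hand diagram to the same normal form, rather than trying to transform one into the other in situ; this keeps the bookkeeping local. A small point to be careful about is the distinction between left and right (co)integrals and the precise side on which $S$ appears: the relations $\mu_L(e_L)=1$, $\mu_L(e_R) = \cdots$, and the conversion $S(e_L)=e_R$-type formulas (valid in any Hopf algebra, cf. \cites{R, Ku1}) must be invoked with the correct handedness, and getting a side wrong produces a spurious extra antipode. Note that, as the statement asserts, no unimodularity or counimodularity is needed here — only the existence and defining property of integrals plus the antipode axioms — so I would avoid accidentally using those hypotheses.

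The main obstacle I anticipate is purely organizational rather than deep: translating the pictures faithfully into Sweedler notation (orientations, the counter-clockwise/clockwise edge-ordering convention fixed in Section \ref{ssec:tn}, and which strand is an input versus an output) and then choosing the order of the three reduction moves so that each (co)integral or antipode relation is actually applicable at the moment it is used. Once the correct reading is pinned down, the reductions themselves are forced.
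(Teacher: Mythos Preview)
The paper does not actually prove this lemma: its entire proof is the single line ``See \cite{Ku2}, Lemma 3.3.'' So there is nothing to compare at the level of argument --- the paper outsources the proof to Kuperberg.

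Your proposed strategy (translate the diagram into Sweedler notation, then repeatedly apply the defining property of the (co)integral together with the antipode axioms and coassociativity until both sides reach the same normal form) is in fact exactly how Kuperberg proves his Lemma~3.3 in \cite{Ku2}, so in spirit you are reconstructing the cited proof rather than giving a new one. Two cautions, though. First, what you have written is a plan, not a proof: you never state what linear map either side of the picture actually represents, and you never carry out a single one of the reductions. For this lemma the entire content \emph{is} the bookkeeping --- the correct reading of the diagram and the correct order of moves --- so a write-up that stops at ``after at most two or three such reductions both sides should collapse'' has not yet done the work. Second, you list involutivity $S^2=\mathrm{id}$ among your ``key algebraic inputs,'' but the lemma is asserted for \emph{any} Hopf algebra; Kuperberg's argument does not use $S^2=\mathrm{id}$, and if you find yourself needing it you have misread an orientation or a left/right handedness somewhere. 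Your own last paragraph correctly flags this as the real danger, and indeed that is where the substance lies.
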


\begin{proof}
See \cite{Ku2}, Lemma 3.3.
\end{proof}
%We say that the Hopf algebra $H$ is counimodular if $\mu_L$ is also right integral.
Set $e_R:=S(e_L)$. Since $S$ is an anti-algebra map, $e_R$ is a (non-zero) right cointegral.

\begin{lem}\label{lem:integral is cyclic}
For a finite dimensional involutory counimodular Hopf algebra, we have
\begin{align*}
\Delta^{\mathrm{op}}(e_R) &= \Delta(e_R),
\end{align*}
where $\Delta^{\mathrm{op}}(x):=x_{(2)}\otimes x_{(1)}$.
\end{lem}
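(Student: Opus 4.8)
The plan is to unwind the definitions of the cointegral $e_R = S(e_L)$ and of counimodularity, and to combine the defining property of a cointegral with the involutivity of $S$. First I would recall that $e_L$ being a left cointegral means $x e_L = \epsilon(x) e_L$ for all $x \in H$, equivalently in terms of comultiplication that $(e_L)_{(1)} \otimes (e_L)_{(2)}$ satisfies the left-cointegral identity $a (e_L)_{(1)} \otimes (e_L)_{(2)} = (e_L)_{(1)} \otimes S^{-1}(a)(e_L)_{(2)}$ type relations; more to the point, I would want the standard fact (this is exactly the content of Lemma \ref{lem:integral}, applied in its dual/cointegral form, or a direct consequence of it) that for a cointegral the two ``legs'' of the coproduct can be swapped at the cost of an application of $S^2$, i.e. $(e_R)_{(2)} \otimes (e_R)_{(1)} = S^2\big((e_R)_{(1)}\big) \otimes S^2\big((e_R)_{(2)}\big)$ up to the modular element. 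Since $H$ is involutory, $S^2 = \mathrm{id}$, and this collapses to $\Delta^{\mathrm{op}}(e_R) = \Delta(e_R)$, provided the modular element is trivial — and that is precisely where counimodularity enters, since the relevant modular element is the distinguished grouplike associated to the (co)integral, which is trivial exactly when $H$ is counimodular.

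More carefully, the key steps in order would be: (1) Translate ``$e_R$ is a right cointegral'' into $e_R x = \epsilon(x) e_R$, and recall that $e_R = S(e_L)$ with $e_L$ a left cointegral — this is already noted in the excerpt. (2) Invoke the known formula expressing $\Delta(e_R)$ in terms of $\Delta^{\mathrm{op}}(e_R)$ twisted by $S^2$ and the distinguished grouplike element $g \in H$ coming from counimodularity of $H$ (equivalently, $g$ is determined by $\mu_L$ versus $\mu_R$); this is the cointegral analogue of the well-known identity $\Delta(\Lambda) = (S^2 \otimes \mathrm{id})\Delta^{\mathrm{op}}(\Lambda)$ twisted by the modular element, and it can be derived from Lemma \ref{lem:integral} or cited from \cite{R}. (3) Use counimodularity: left integrals equal right integrals forces the distinguished grouplike $g = 1$, so the twist by $g$ disappears. (4) Use involutivity $S^2 = \mathrm{id}_H$ to kill the remaining $S^2$, leaving $\Delta(e_R) = \Delta^{\mathrm{op}}(e_R)$.

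The main obstacle I anticipate is step (2): pinning down the exact form of the ``swap'' identity for $\Delta(e_R)$ with the correct placement of $S^2$ and the correct modular/grouplike element, since several closely related but inequivalent formulas appear in the literature (Larson--Sweedler, Radford), and one must be careful about left vs. right, integral vs. cointegral, and which distinguished grouplike ($\alpha$ for $H$ or $a$ for $H^*$) governs the twist. Getting the bookkeeping right — so that counimodularity of $H$ (not unimodularity) is the hypothesis that trivializes the twist — is the delicate point; once that is set up correctly, the involutory hypothesis finishes the argument immediately. I would also double-check the normalization convention $\mu_L(e_L) = 1$ used here is consistent with whatever source is cited for the swap identity, adjusting by the scalar if needed, though since the statement is about $\Delta(e_R)$ up to nothing (an exact equality), the normalization is in fact irrelevant to the final identity and only the grouplike twist matters.
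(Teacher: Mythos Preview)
The paper does not actually give a proof of this lemma: it simply cites \cite{R}*{Theorem 10.5.4} and moves on. So there is no ``paper's approach'' to compare against beyond the fact that the authors defer entirely to Radford.

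Your outline is a reasonable reconstruction of what lies behind that citation, and the strategy is correct: the general identity you are reaching for in step~(2) is precisely Radford's formula relating $\Delta(e_R)$ and $\Delta^{\mathrm{op}}(e_R)$ via $S^2$ and the distinguished grouplike $a \in H$ determined by the one-sidedness of the integral (equivalently, by the failure of counimodularity). Counimodularity forces $a = 1$, and involutivity kills the $S^2$, exactly as you say. Your self-identified obstacle --- getting the left/right and integral/cointegral bookkeeping straight so that it is \emph{counimodularity} (and not unimodularity) that trivializes the twist --- is real but routine once you fix conventions; Radford's Theorem 10.5.4 is stated so that this alignment is already done. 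In short: your proposal is not wrong, it is just more than the paper does, since the paper is content to outsource the lemma entirely.
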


\begin{proof}
See \cite{R}, Theorem 10.5.4.
\end{proof}

\begin{lem}\label{tw}
Let $H$ be an involutory Hopf algebra. We have the following.
\begin{figure}[H]
    \centering
    \includegraphics{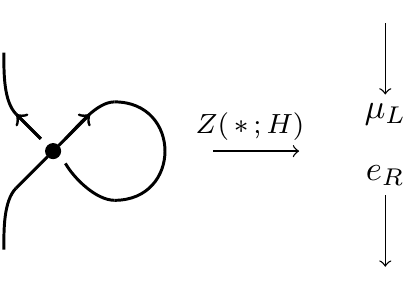}
\end{figure}
\end{lem}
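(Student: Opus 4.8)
The plan is to read the pictorial identity of Lemma \ref{tw} as an equality of tensor networks over $H$ in the sense of Section \ref{ssec:tn}, and to reduce it to the single algebraic fact that $S^{2}=\mathrm{id}_{H}$. First I would expand each side of the diagram into an explicit composition of the elementary maps $M,\Delta,\epsilon,S$ — together with the crossing tensor, which by Proposition \ref{prop:same invariants} and the definitions \eqref{eq:can}, \eqref{eq:can inv} of $T$ and $\overline{T}$ is itself built from $M$ and $\Delta$ — being careful about the orientation conventions of the formalism: tensor-network strands run opposite to the o-graph strands, an incoming edge carries $H^{*}$ and an outgoing edge carries $H$, and the sign of a crossing selects the tensor for $T$ or for $\overline{T}$. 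The left-hand side of the asserted identity contains a \emph{kink} (a local self-crossing of one strand); the standard computation — applying the antipode and counit axioms $M(S\otimes\mathrm{id}_{H})\Delta=M(\mathrm{id}_{H}\otimes S)\Delta$ equals unit composed with counit, together with coassociativity — shows that such a kink realizes the endomorphism $S^{\pm2}$ inserted on the affected strand, while the remainder of the network matches the right-hand side verbatim.

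The second step is to invoke involutivity: since $S^{2}=\mathrm{id}_{H}$ (hence also $S^{-2}=\mathrm{id}_{H}$), the spurious factor $S^{\pm2}$ produced by the kink disappears, and the two tensor networks coincide. Concretely this amounts to verifying one identity of the shape
\[
\bigl(\text{expansion of LHS}\bigr)=S^{\pm2}\circ\bigl(\text{expansion of RHS}\bigr)
\]
and then cancelling the $S^{\pm2}$. Note that no (co)unimodularity is needed for this, which is consistent with Lemma \ref{tw} being stated for an arbitrary involutory $H$; if the figure in fact carries a (co)integral bead, the extra input required would be Kuperberg's identity (Lemma \ref{lem:integral}) and the cyclicity of Lemma \ref{lem:integral is cyclic}, but the hypothesis list tells us the computation will not call on them, so the integral bead — if present — should pass through the twist untouched by the same $S^{2}=\mathrm{id}$ mechanism.

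The main obstacle I anticipate is purely bookkeeping rather than conceptual: one must pin down from the picture exactly which strand the kink wraps and with which handedness, so as to be certain the emergent factor is genuinely a power of $S^{2}$ (and not, say, $S$ composed with a partial transposition that does not collapse). Fixing the orientations and the crossing signs unambiguously is the delicate part; once that is done, involutivity of the antipode closes the argument in one line.
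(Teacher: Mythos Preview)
Your reading of the lemma is off in a way that derails the whole proposal. You are assuming that the right-hand side of the figure is simply the same strand with the kink removed, so that the content is ``kink $=$ identity up to $S^{\pm2}$''. That is not what Lemma~\ref{tw} asserts. Look at how the lemma is used immediately afterwards in the proof of Proposition~\ref{prop:CP}: ``Using Lemma~\ref{tw}, the twists in the closed normal o-graph can be replaced by \emph{integrals}.'' The right-hand side of the twist identity is an expression built from the integral $\mu_{L}$ and the cointegral $e_{R}$ (or $e_{L}$), not the bare strand. Consistently, the paper's own proof proceeds by first expanding the crossing into its $M$/$\Delta$ tensor network and then invoking Lemma~\ref{lem:integral} (Kuperberg's integral identity) --- precisely the ingredient you decided to set aside on the grounds that the hypothesis list mentions only involutivity. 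Integrals exist in any finite-dimensional Hopf algebra, so their appearance costs nothing beyond finite-dimensionality; involutivity is used in the computation, but it is not the whole story.

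The mechanism you propose --- that the self-crossing contributes a factor $S^{\pm2}$ on one strand --- is an analogy imported from $R$-matrix/ribbon invariants and does not survive here. The tensor network attached to a vertex is built from \emph{both} $M$ and $\Delta$ (see Section~\ref{sec:IH}), so when two legs of a single crossing are joined into a loop you do not get the antipode-squared endomorphism; you get a closed bubble in the Hopf tensor calculus, and such bubbles are exactly what Lemma~\ref{lem:integral} converts into integrals. Until you write out the actual tensor network of the kinked vertex and trace the loop, you will not see the integral emerge, and your $S^{\pm2}$ heuristic will lead you to the wrong target identity. Redo the expansion honestly and then apply Lemma~\ref{lem:integral}; that is where the argument lives.
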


\begin{proof}
Replacing the vertex of the o-tangle with the corresponding tensor network we have
\begin{figure}[H]
    \centering
    \includegraphics{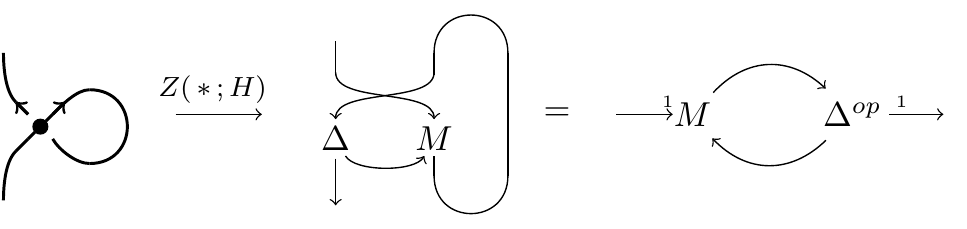}
\end{figure}
Using Lemma \ref{lem:integral} we get
\begin{figure}[H]
    \centering
    \includegraphics{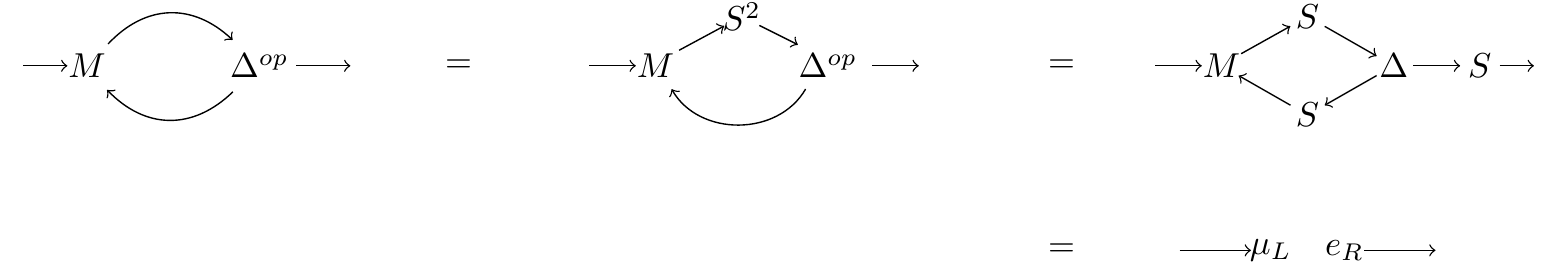}
\end{figure}
\end{proof}

\begin{prop}
\label{prop:CP}
Let $H$ be a finite dimensional involutory unimodular counimodular Hopf algebra.
Then $Z(\Gamma;H)$ is an invariant under the CP-move in Figure \ref{fig:CP}.
\end{prop}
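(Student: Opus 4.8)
The plan is to verify invariance under the CP-move by the same strategy used for the other moves: reduce the equality of the two tensor networks (the one coming from the left-hand side of Figure~\ref{fig:CP} and the one from the right-hand side) to identities that have already been established, namely Lemma~\ref{lem:integral}, Lemma~\ref{lem:integral is cyclic}, and Lemma~\ref{tw}, together with the defining properties of a finite dimensional involutory unimodular counimodular Hopf algebra. First I would replace every crossing appearing in the CP-move diagram with the corresponding elementary tensor network (the positive/negative vertex tensors introduced in Section~\ref{sec:IH}), and likewise replace each maximum and minimum with the evaluation and coevaluation maps, obtaining on each side of the move a closed tensor network over $H$ that evaluates to a scalar in $\mathbb{K}$.

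Next I would simplify each of the two networks independently. On the side of the CP-move that contains the extra local configuration (the part that is genuinely ``new'' compared to an MP-move), the key observation is that the strands involved close up into a small sub-network that is exactly the left-hand side of Lemma~\ref{tw}; applying that lemma rewrites the configuration in terms of a cointegral bead $e_R$ and an integral bead. Once the (co)integrals appear, Lemma~\ref{lem:integral} lets me slide the integral past a multiplication/comultiplication node, collapsing a loop, and Lemma~\ref{lem:integral is cyclic} (which is precisely where counimodularity enters, via Theorem~10.5.4 of~\cite{R}) lets me treat $\Delta(e_R)$ as cocommutative, which is needed to match the two sides where the branching forces the two coproduct legs to be attached in opposite cyclic orders. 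Unimodularity enters in the dual way: it guarantees that the left cointegral $e_L$ used to build $e_R=S(e_L)$ is also a right cointegral, so that $e_R$ can be slid through from either side of a product node, which is what makes the simplified networks on the two sides literally coincide.

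After these reductions I expect both sides to collapse to the same normal form — concretely, a single closed loop carrying a composite of $M$, $\Delta$, $S$, and one (co)integral insertion, which by Lemma~\ref{lem:integral} and the normalization $\mu_L(e_L)=1$ evaluates to the same scalar — and hence $Z(\Gamma;H)$ is unchanged by the CP-move. By Proposition~\ref{prop:same invariants} this is the same as $Z(\Gamma;\mathcal{H}(H))$, completing the missing case in the proof of Theorem~\ref{mt}.

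The main obstacle will be the bookkeeping of orientations and cyclic orders at the branch points: the CP-move, unlike the MP-moves, changes the combing, so the two diagrams are not related by any local ``pentagon-type'' algebraic identity, and one must genuinely use the special structure of the (co)integral (its (co)cyclicity and (co)unimodularity) to bridge the gap. Getting the tensor-network legs to line up so that Lemma~\ref{tw} and Lemma~\ref{lem:integral} apply in the right places — rather than producing an answer that differs by an application of $S$ or by a transposition of two tensor factors — is the delicate step, and it is exactly there that the hypotheses \emph{involutory}, \emph{unimodular}, and \emph{counimodular} are each used once and are each indispensable.
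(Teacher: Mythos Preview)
Your plan matches the paper's proof: apply Lemma~\ref{tw} to trade the twists in the CP-diagram for (co)integral beads, then simplify each side with Lemma~\ref{lem:integral} and the cocommutativity of $\Delta(e_R)$ from Lemma~\ref{lem:integral is cyclic} until the two tensor networks visibly coincide. The one point you do not anticipate is the very last step: after all the reductions the two sides are not literally equal but differ by a scalar factor $k^{2}$, where $k\in\mathbb{K}^{\times}$ is determined by $e_{R}=ke_{L}$ --- this proportionality, together with $\mu_{L}(e_{R})=k$, is how unimodularity is actually used in the paper, rather than as a ``sliding'' property of $e_{R}$ through a product node --- and one then needs involutivity a \emph{second} time, via $S^{2}(e_{L})=e_{L}$, to conclude $k^{2}=1$. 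So your closing claim that each hypothesis is used exactly once undercounts involutivity, and your description of the role of unimodularity should be adjusted accordingly, but the overall strategy is the paper's.
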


\begin{proof}
First, we evaluate the left hand side of the CP-move.
Using Lemma \ref{tw}, the twists in the closed normal o-graph can be replaced by integrals,  and thus we have
\begin{figure}[H]
    \centering
    \includegraphics{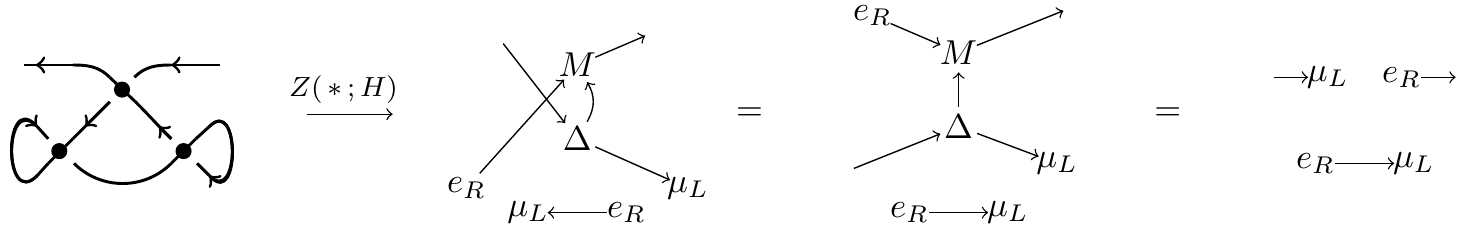}
\end{figure}
\noindent
Next, we evaluate a part of the right hand side of the CP-move.
\begin{figure}[H]
    \centering
    \includegraphics{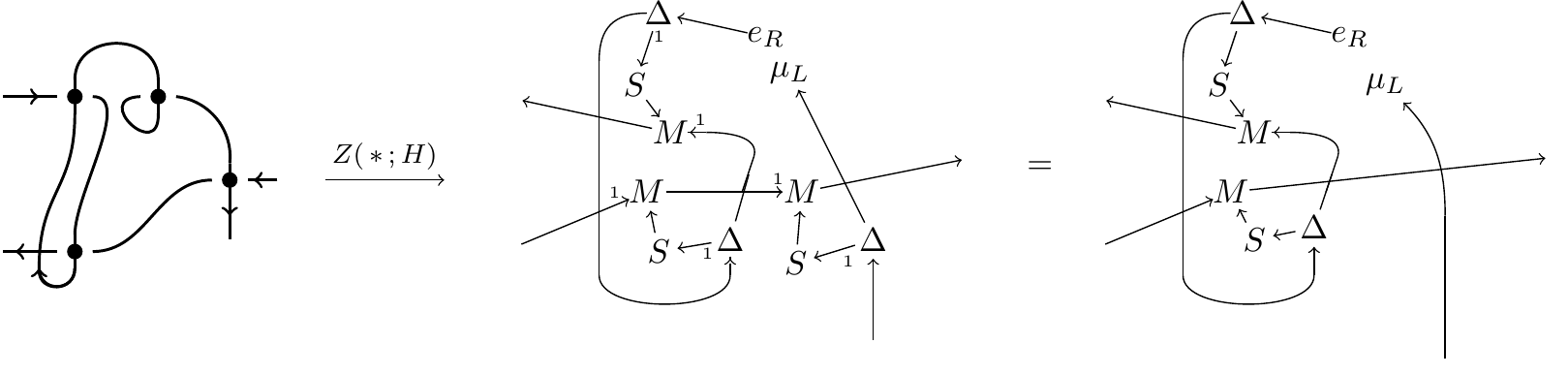}
\end{figure}
\noindent
We used the definition of left integral $\mu_L$ for the equality above.
From Lemma \ref{lem:integral is cyclic}, $e_R$ is cyclic. Thus,
\begin{figure}[H]
    \centering
    \includegraphics{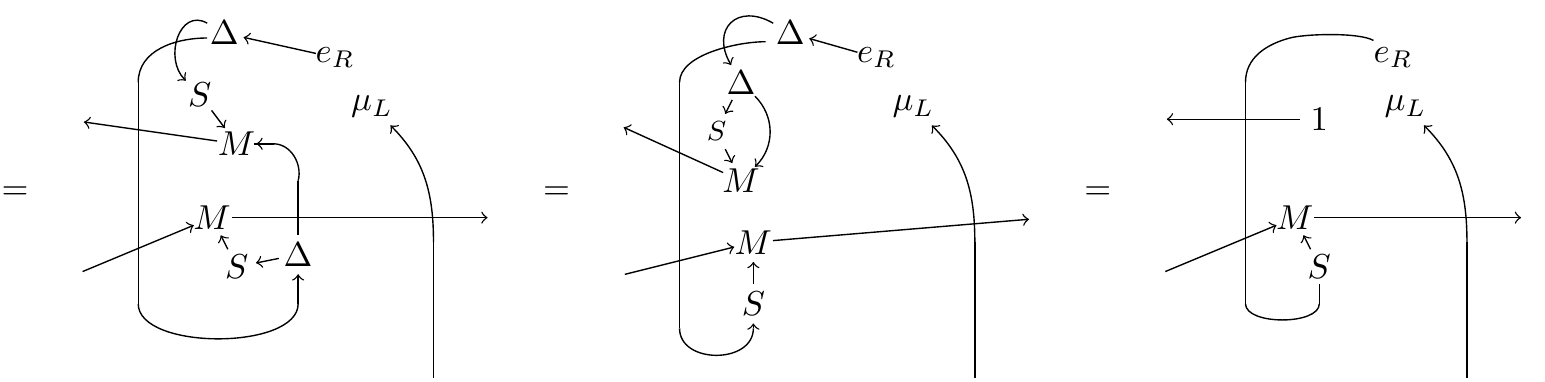}
\end{figure}
\noindent
The second equality follows from coassociativity of the comultiplication, and the last equality follows from  the property of the antipode. 
Thus right hand side of the CP-move becomes
\begin{figure}[H]
    \centering
    \includegraphics{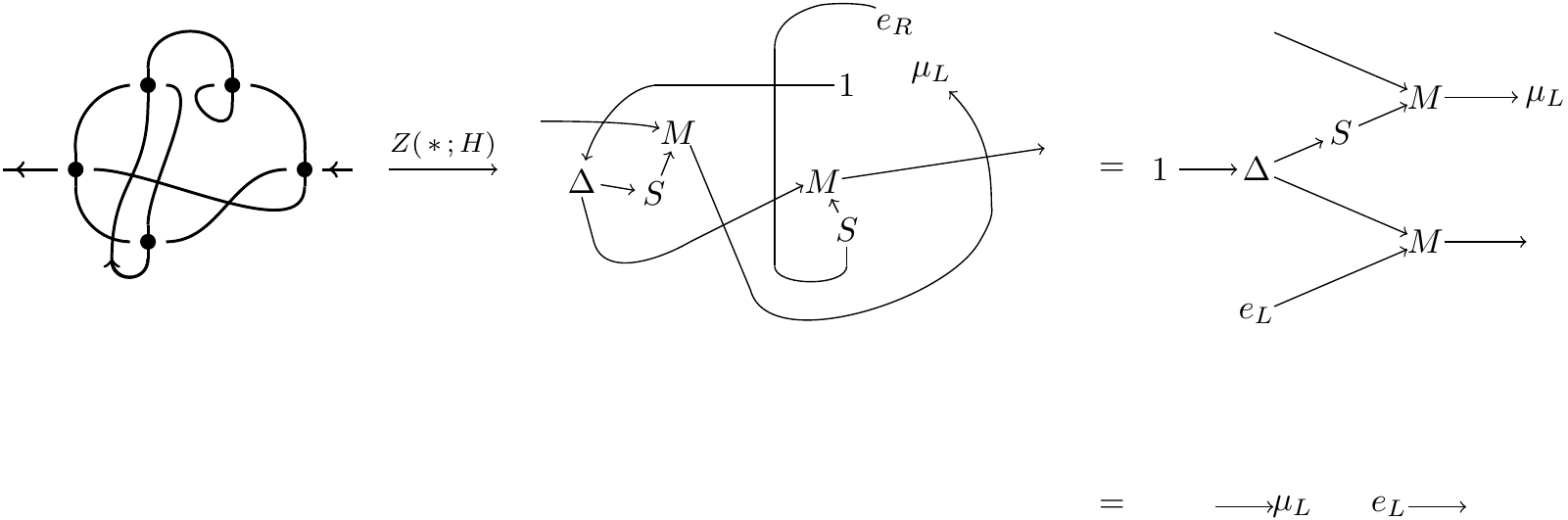}
\end{figure}
\noindent
Finally we need only to show that the following equality holds for involutory unimodular counimodular Hopf algebra.
\begin{figure}[H]
    \centering
    \includegraphics{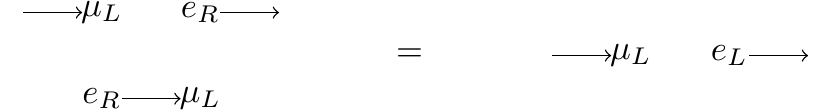}
\end{figure}
\noindent
Since $H$ is unimodular, $e_R=ke_L$ for some non-zero $k \in \mathbb{K}$.
Since $\mu_L(e_L)$ is $1$, $\mu_L(e_R)=k$ and the left hand side of the above tensor network is the same as right hand side times $k^2$.
Applying $S^2$ to $e_L$ and using the fact that $S$ is involutive, we see that $k^2=1$, and the above equality follows.
\end{proof}

\section{Properties}
\label{sec:csf}
We continue to assume that $H$ is a finite dimensional involutory unimodular counimodular Hopf algebra over $\mathbb{K}$.
For a  closed oriented $3$-manifold $M$ and a closed normal o-graph $\Gamma$ representing $M$, set  $Z(M;H):=Z(\Gamma;H)$.

\subsection{Connected sum formula}
For  closed oriented $3$-manifolds $M,N$, let $M\# N$ be the connected sum of them.
\begin{prop}
$Z(M\# N;H)=Z(M;H)Z(N;H)$.
\end{prop}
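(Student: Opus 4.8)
The plan is to exploit the fact that the connected sum operation has a very clean description at the level of branched ideal triangulations, and hence at the level of closed normal o-graphs. Concretely, if $\Gamma_M$ and $\Gamma_N$ are closed normal o-graphs representing $M$ and $N$ respectively, then $M \# N$ is represented by a closed normal o-graph $\Gamma_{M \# N}$ obtained by combining $\Gamma_M$ and $\Gamma_N$. The first step is to pin down this combined diagram: since capping off the $S^2$ boundary of an ideally triangulated manifold with $B^3$ is built into the reconstruction map $\Phi$, forming a connected sum amounts to removing a ball from each piece and gluing along the resulting sphere, which in the dual branched-spine picture corresponds to taking a disjoint union of the two o-graphs and then performing a local connecting move (equivalently, a suitable 0-2 move together with planar isotopy) to reconnect the two circuits required by axiom \textbf{C1} into a single circuit. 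I would first establish, citing the reconstruction procedure of \cite{BP}, that $\Phi(\Gamma_{M \# N}) = M \# N$ for this explicitly described $\Gamma_{M \# N}$.

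Next I would compute $Z(\Gamma_{M \# N}; H)$ using the tensor network reformulation of Section \ref{sec:tnm}, which is the natural setting here because the connected sum diagram decomposes the underlying closed curve of $C_\Gamma$ into a concatenation. When we slide all beads of $C_{\Gamma_{M \# N}}$ together along the single circuit, the beads coming from the vertices of $\Gamma_M$ accumulate into the single bead $J_{\Gamma_M}$ and those from $\Gamma_N$ into $J_{\Gamma_N}$, so that $J_{\Gamma_{M\# N}} = J_{\Gamma_M} \cdot J_{\Gamma_N}$ in $\mathcal{H}(H)$ up to the local contributions of the extra connecting feature. The key algebraic point is that this extra feature contributes only the identity: a 0-2 move contributes $T_2\overline{T}_2\overline{T}_1T_1 = \epsilon \otimes 1$ by the computation already carried out in the proof of Theorem \ref{mt}, so no spurious scalar appears. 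Therefore $J_{\Gamma_{M \# N}} = J_{\Gamma_M} \cdot J_{\Gamma_N}$ in $\mathcal{H}(H)/[\mathcal{H}(H),\mathcal{H}(H)]$.

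Finally I would invoke the multiplicativity of $\raisebox{2pt}{$\chi$}_{Fock}$ on products of this form. By Proposition \ref{prop:fock is int}, $\raisebox{2pt}{$\chi$}_{Fock}(f \otimes a) = f(e_L)\mu_R(a)$, and under the algebra isomorphism $\phi$ of Proposition \ref{prop:K} the character $\raisebox{2pt}{$\chi$}_{Fock}$ is the normalized matrix trace on $\End(H^*)$; since $J_{\Gamma_M}$ acts on the Fock space as a rank-one-type operator whose trace is precisely the scalar $Z(M;H)$, the image of $J_{\Gamma_M}$ in the quotient is $Z(M;H)$ times the image of the unit, and likewise for $N$. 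Hence $Z(M \# N; H) = \raisebox{2pt}{$\chi$}_{Fock}(J_{\Gamma_M} \cdot J_{\Gamma_N}) = Z(M;H)\, Z(N;H)$.

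The main obstacle I anticipate is the first step: verifying carefully that the naive "disjoint union plus a reconnecting move" on o-graphs does represent the connected sum and satisfies axioms \textbf{C1}--\textbf{C3}, in particular controlling the count of circuits in \textbf{C3} so that the branching structures glue to a valid branched triangulation of $M \# N$. Once the correct combined diagram is identified, the remaining steps are a routine bead-sliding argument plus the already-established structural results about $\raisebox{2pt}{$\chi$}_{Fock}$.
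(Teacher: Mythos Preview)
Your overall strategy matches the paper's: both start from the fact (due to Koda, cited in the paper as \cite{Ko}) that there is an explicit closed normal o-graph $\Gamma_M\#\Gamma_N$ representing $M\#N$, obtained by a local splice of $\Gamma_M$ and $\Gamma_N$. Your middle step, reducing to a product $J_{\Gamma_M}\cdot J_{\Gamma_N}$ by bead-sliding, is also in the right spirit.

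The genuine gap is in your final step. You assert that $\raisebox{2pt}{$\chi$}_{Fock}(J_{\Gamma_M}\cdot J_{\Gamma_N})=\raisebox{2pt}{$\chi$}_{Fock}(J_{\Gamma_M})\,\raisebox{2pt}{$\chi$}_{Fock}(J_{\Gamma_N})$, justifying this by saying that $J_{\Gamma_M}$ ``acts on the Fock space as a rank-one-type operator'' and that its image in the quotient is $Z(M;H)$ times the image of the unit. Neither claim holds. The map $\raisebox{2pt}{$\chi$}_{Fock}$ is the trace of a $(\dim H)$-dimensional representation, so it is \emph{not} multiplicative; equivalently, $[\mathcal{H}(H),\mathcal{H}(H)]$ is a subspace, not an ideal, so the quotient $\mathcal{H}(H)/[\mathcal{H}(H),\mathcal{H}(H)]$ carries no algebra structure into which multiplication descends. (Even the normalization is off: $\raisebox{2pt}{$\chi$}_{Fock}(\epsilon\otimes 1)=\dim H$, not $1$.) And there is no reason for $\phi(J_{\Gamma_M})\in\End(H^*)$ to be rank one in general; nothing in the construction forces this.

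The paper sidesteps this entirely by working in the tensor-network reformulation $Z(\Gamma;H)$ rather than with the bead $J_\Gamma$. There the splice in $\Gamma_M\#\Gamma_N$ produces a local tensor-network fragment which, after a short graphical manipulation using the Hopf algebra axioms, literally disconnects the two pieces: the closed network for $\Gamma_M\#\Gamma_N$ becomes the disjoint union of the closed networks for $\Gamma_M$ and for $\Gamma_N$, and disjoint closed networks multiply as scalars by definition. No trace-multiplicativity is ever invoked. If you want to salvage your $\mathcal{H}(H)$-based argument, you would need an honest reason why the particular element $J_{\Gamma_M}$ has the special form that makes $\mathrm{tr}\big(\phi(J_{\Gamma_M})\phi(J_{\Gamma_N})\big)$ factor; the cleaner route is simply to switch to the tensor-network picture as the paper does.
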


\begin{proof}
Let $\Gamma_M$  and $\Gamma_N$ be closed normal o-graphs representing $M$ and $N$ respectively.
Let $\Gamma_M\# \Gamma_N$ be the connected sum of closed normal o-graphs defined in Figure \ref{fig:connected sum}.

\begin{figure}[ht]
    \centering
    \includegraphics{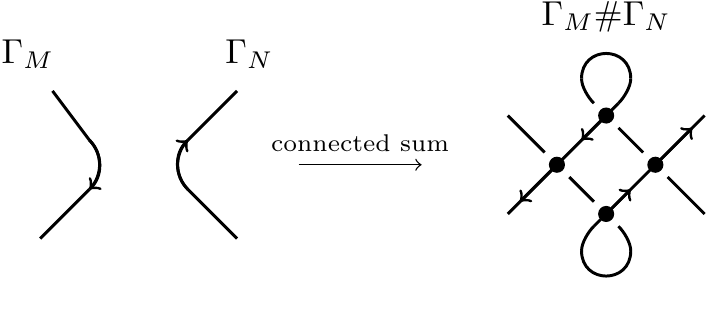}
    \caption{}
    \label{fig:connected sum}
\end{figure}

In \cite{Ko}, Y. Koda showed that for two closed normal o-graphs $\Gamma_M$ and $\Gamma_N$, the $3$-manifold represented by $\Gamma_M\# \Gamma_N$ is $M\# N$. We show the assertion by comparing the tensor networks for the closed normal o-graphs in the left and right hand sides of the Figure \ref{fig:connected sum}. 
Note that 
\begin{figure}[H]
    \centering
    \includegraphics{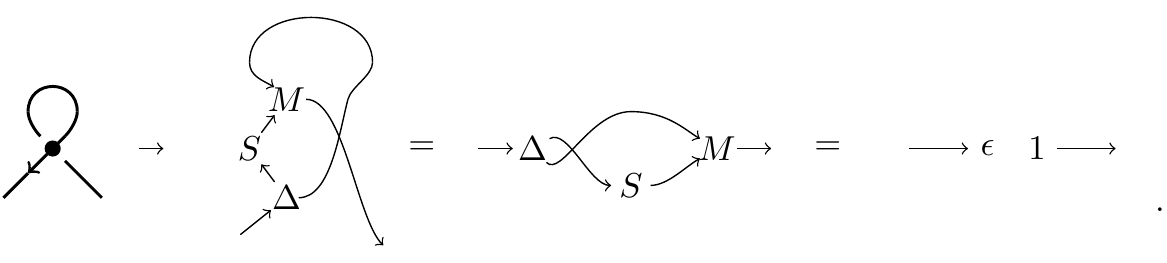}
\end{figure}
Thus we have
\begin{figure}[H]
    \centering
    \includegraphics{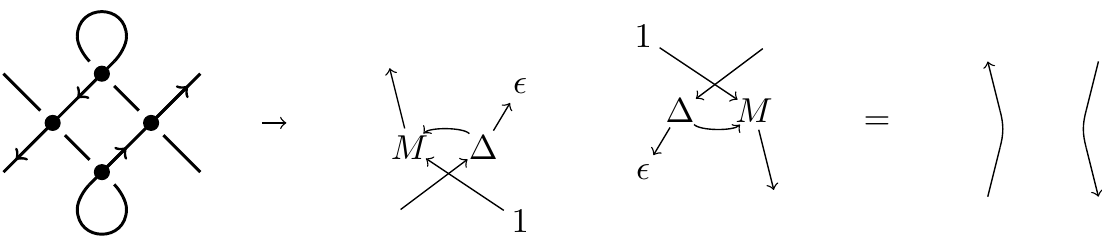}
\end{figure}
which implies the assertion.
\end{proof}

\subsection{Group algebra}
We show that the invariant $Z(M;\mathbb{C}[G])$ with the group algebra $\mathbb{C}[G]$ of a finite group $G$ counts the number $|\Hom(\pi_1M,G)|$ of group homomorphisms from the fundamental group $\pi_1 M$ of $M$ to $G$.
The proof essentially follows the line of [KR].

The algebra $\mathbb{C}[G]$ has a canonical basis given by $\{ g \}_{g\in G}$ and the Hopf algebra structure given by $\Delta(g)=g\otimes g$, $\epsilon(g)=1$, and $S(g)=g^{-1}$.
Note that $\mathbb{C}[G]$ is involutory and also unimodular and counimodular.
The dual group algebra $\mathbb{C}(G):=\mathbb{C}[G]^*$ has the dual basis $\{ \delta_g \}_{g\in G}$, and the dual Hopf algebra structure is given as follows.
\begin{center}
    $\delta_{g}\cdot\delta_{h} = \delta _{g,h}\delta_{g} \qquad 1_{\mathbb{C}(G)} = \sum\limits_{g\in G}\delta_{g}$
\end{center}
\begin{center}
    $\Delta(\delta_{g}) = \sum\limits_{hk=g}\delta_{h}\otimes \delta_{k} \qquad \e(\delta_g) = \delta_g(e)$
\end{center}
\begin{center}
    $S(\delta_g) = \delta_{g^{-1}}$
\end{center}
where  $\delta_{g, h}\in \{ 0,1 \}$ is $1$ if $g=h$ and $0$ otherwise, and $e$ is the unit of $G$.

The left action of $x\in G\subset\mathbb{C}[G]$ on $\delta_{g}\in\mathbb{C}(G)$ is given by
\begin{align*}
x\rightharpoonup \delta_g = \delta_{gx^{-1}}.
\end{align*}

\begin{prop}
For a closed oriented $3$-manifold $M$ and a finite group $G$, we have $Z(M;\mathbb{C}[G])=|\Hom(\pi_1M,G)|$.
\end{prop}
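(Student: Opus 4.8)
The plan is to compute the scalar $Z(M;\mathbb{C}[G])$ directly from the tensor network formulation of Proposition \ref{prop:same invariants}, and to interpret the result as a sum over "colorings" of the edges of the closed normal o-graph by group elements, in the spirit of the state-sum arguments of [KR]. Concretely, I would first observe that in the group algebra case the tensor network built from $Z(\Gamma;H)$ can be rewritten in the dual basis $\{\delta_g\}$ of $\mathbb{C}(G)$: the comultiplication $\Delta(\delta_g)=\sum_{hk=g}\delta_h\otimes\delta_k$ of $\mathbb{C}(G)$ means that a wire carrying $H^*=\mathbb{C}(G)$ is equivalent to a wire carrying a $G$-valued label, with the splitting/merging vertices enforcing multiplicativity of these labels. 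Likewise a wire carrying $H=\mathbb{C}[G]$ (from the down-oriented strands) carries a group element via $\Delta(g)=g\otimes g$, so a $\mathbb{C}[G]$-wire just propagates a single $G$-label along the strand. Thus the whole network becomes a sum over assignments of group elements to the edges/regions of $\Gamma$, weighted by a product of local delta-functions coming from the crossings (each crossing contributes the tensor for $M$, $\tau$, $\Delta$, $S$ in some combination, whose group-algebra value is a Kronecker delta), and by the left action $x\rightharpoonup\delta_g=\delta_{gx^{-1}}$ at the places where a $\mathbb{C}[G]$-bead acts on a $\mathbb{C}(G)$-wire.

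Next I would identify what combinatorial data on $\Gamma$ the surviving colorings encode. Since $\Gamma$ is a branched ideal triangulation of $M$ (each vertex of $\Gamma$ is an ideal tetrahedron with an ordering of its vertices as in Figure \ref{fig:reconstruction map}), a $G$-coloring of the edges of the triangulation satisfying the local "flatness" constraints at each tetrahedron is exactly a representation of the (orbifold/edge-path) fundamental groupoid, hence — because the triangulation has $S^2$ boundary which is capped by a ball, and the edges of the triangulation generate $\pi_1 M$ with the tetrahedra giving the relations — the same as a homomorphism $\pi_1 M\to G$. The key step is to match the delta-constraints produced by the single crossing tensor (positive and negative) with the single relation imposed by a tetrahedron in a presentation of $\pi_1 M$ read off from the branched triangulation; this is precisely the bookkeeping that [KR] carry out for the Kuperberg/Hennings-type invariant, and the pentagon/Pachner compatibility already proved guarantees the count is triangulation-independent so one only needs to check it for any convenient family of diagrams, or abstractly via the edge-path groupoid.

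Finally I would assemble the count: each admissible $G$-coloring contributes exactly $1$ (all nonzero weights are $1$ because $\Delta$, $\epsilon$, $S$ on the canonical bases of $\mathbb{C}[G]$ and $\mathbb{C}(G)$ involve only coefficients in $\{0,1\}$, and the trace at the end — the capping of the single circuit of axiom \textbf{C1} — just identifies the two ends of that circuit, imposing one more consistent delta), and the set of admissible colorings is in bijection with $\mathrm{Hom}(\pi_1 M,G)$, giving $Z(M;\mathbb{C}[G])=|\mathrm{Hom}(\pi_1 M,G)|$. I expect the main obstacle to be the careful translation between the orientation conventions of the tensor network (up/down strands giving $\mathbb{C}(G)$ vs.\ $\mathbb{C}[G]$ wires, and the reversed orientation convention noted after Figure with \texttt{vertex\_tn\_-}) and the geometric edge-labeling of the branched triangulation, so that the local delta-functions at a $\pm$ crossing genuinely reproduce the tetrahedron relation with the correct inverses; once that dictionary is fixed, invariance under the moves (already established) lets one avoid a global argument and reduces everything to the local check, and the weight-$1$ claim is immediate from the explicit Hopf structure constants of $\mathbb{C}[G]$ listed above.
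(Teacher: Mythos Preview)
Your approach is essentially the same as the paper's: interpret $Z(\Gamma;\mathbb{C}[G])$ as a state sum over $G$-labelings and identify the surviving labelings with $\Hom(\pi_1 M,G)$. The paper makes this precise by computing the Fock action on the basis $\{\delta_g\}$ (obtaining $T:\delta_h\otimes\delta_g\mapsto\delta_{hg^{-1}}\otimes\delta_g$ and $\overline{T}:\delta_g\otimes\delta_h\mapsto\delta_g\otimes\delta_{hg}$) and then invoking a result of Koda \cites{Ko,Ko3}: the group $\langle E\mid R\rangle$ with $E$ the set of \emph{edges of $\Gamma$} (not edges of the triangulation) and $R$ the relations $g=l$, $hg=k$ at each vertex, is isomorphic to $\pi_1 M$. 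This is exactly the local dictionary you are aiming for, and it obviates your fundamental-groupoid detour and the ``check on a convenient family'' suggestion. One small correction: the labels live on the edges of the o-graph (dually, the \emph{faces} of the ideal triangulation), not on the edges of the triangulation; with that adjustment your sketch matches the paper's argument.
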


\begin{proof}
Let $\Gamma$ be a closed normal o-graph representing $M$.
In \cites{Ko, Ko3}, Y. Koda gave an explicit formula for the fundamental group $\pi_1 M$  in terms of closed normal o-graph $\Gamma$.  Let $E$ be the set of all edges of $\Gamma$. We consider the group generated by $E$ and consider the relation set $R$ consisting of  $g=l$, $hg=k$ for the edges  $g, l, h, k$ around each vertex as below.
Then the resulting group $\langle E\,|\,R\rangle$ is isomorphic to the fundamental group $\pi_1M$ of $M$,  and the number $|\Hom(\pi_1M,G)|$ is equal to the number of edge colorings  $c\co E \to G$ such that $c(R)$ holds in $G$.
\begin{figure}[H]
    \centering
    \includegraphics{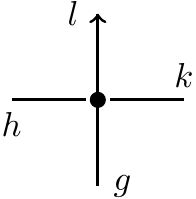}
\end{figure}

We show that the invariant $Z(M;\mathbb{C}[G])$ indeed counts such edge colorings. 
As we explained in Section \ref{sec:IH}, each vertex of closed normal o-graph can be treated as a linear map between $\mathbb{C}(G)^{\otimes 2}$:
\begin{figure}[H]
    \centering
    \includegraphics{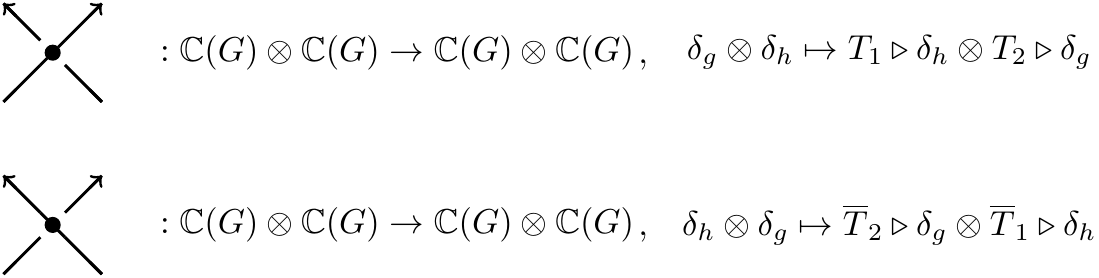}
\end{figure}
\noindent Here we have,
\begin{align*}
T_1\triangleright\delta_h\otimes T_2\triangleright\delta_g &= \sum\limits_{x\in G}(\e\otimes x)\triangleright\delta_h\otimes (\delta_x\otimes e)\triangleright\delta_g \\
&= \sum\limits_{x\in G}x\rightharpoonup\delta_h\otimes\delta_x\cdot\delta_g\\
&= \sum\limits_{x\in G}\delta_{hx^{-1}}\otimes \delta_{x,g}\delta_g\\
&= \delta_{hg^{-1}}\otimes \delta_g\\
\end{align*}
and
\begin{align*}
\overline{T}_2\triangleright\delta_g\otimes\overline{T}_1\triangleright\delta_h &= \sum\limits_{x\in G}(\delta_x\otimes e)\triangleright\delta_g\otimes (\e\otimes x^{-1})\triangleright\delta_h\\
&= \sum\limits_{x\in G}\delta_x\cdot\delta_g\otimes x^{-1}\rightharpoonup\delta_h\\
&= \sum\limits_{x\in G}\delta_{x,g}\delta_g\otimes\delta_{hx}\\
&= \delta_{g}\otimes\delta_{hg}.\\
\end{align*}
We draw these maps as follows.
\begin{figure}[H]
    \centering
    \includegraphics{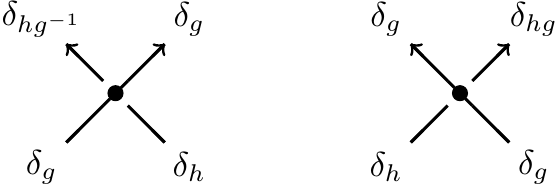}
\end{figure}
\noindent Note that the subscripts of $\delta$ give nothing but an edge coloring (around the vertices) as desired. Furthermore, to connect those vertices by strands means that we insert maximum and minimum points among them. Recall from Section \ref{sec:IH} that maximum and minimum points correspond to evaluations and coevaluations, respectively. In the present case they are the maps shown below, which means after all we sum up all edge colorings.
\begin{figure}[H]
    \centering
    \includegraphics{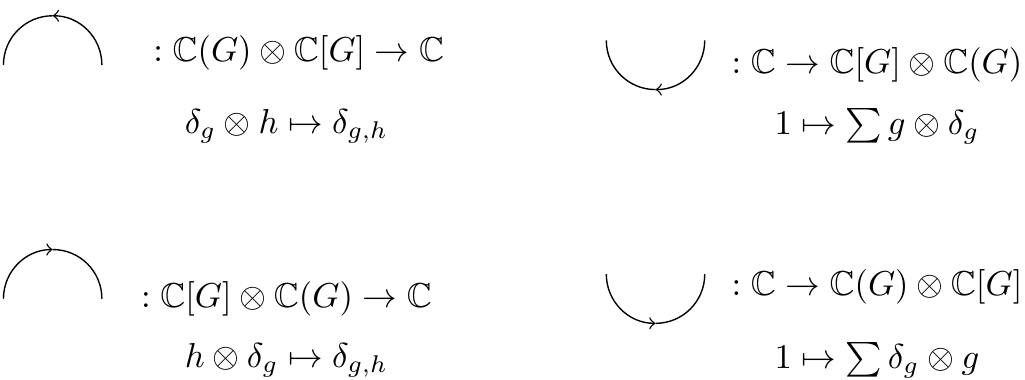}
\end{figure}
\end{proof}

\begin{bibdiv}
\begin{biblist}
\bib{BS}{article}{
	author={Baaj, Saad},
	author={Skandalis, Georges},
    title={Unitaires multiplicatifs et dualit\'e pour les produits crois\'es de $\mathrm {C}^\ast $-alg\`ebres},
    journal={Ann. Sci. \'Ecole Norm. Sup. (4)},
    volume={26},
    date={1993},
    number={4},
    pages={425--488},
    %publisher = {Elsevier},
    %doi = {10.24033/asens.1677},
}

\bib{BP}{book}{
   author={Benedetti, Riccardo},
   author={Petronio, Carlo},
   title={Branched standard spines of $3$-manifolds},
   series={Lecture Notes in Mathematics},
   volume={1653},
   publisher={Springer-Verlag, Berlin},
   date={1997},
   pages={viii+132},
   isbn={3-540-62627-1},
   %review={\MR{1470454}},
   %doi={10.1007/BFb0093620},
}

\bib{BW}{article}{
   author={Barrett, John W.},
   author={Westbury, Bruce W.},
   title={Invariants of piecewise-linear $3$-manifolds},
   journal={Trans. Amer. Math. Soc.},
   volume={348},
   date={1996},
   number={10},
   pages={3997--4022},
   issn={0002-9947},
   %review={\MR{1357878}},
   %doi={10.1090/S0002-9947-96-01660-1},
}

\bib{H}{article}{
   author={Hennings, Mark},
   title={Invariants of links and $3$-manifolds obtained from Hopf algebras},
   journal={J. London Math. Soc. (2)},
   volume={54},
   date={1996},
   number={3},
   pages={594--624},
   issn={0024-6107},
   %review={\MR{1413901}},
   %doi={10.1112/jlms/54.3.594},
}

\bib{J}{book}{
   author={Jacobson, Nathan},
   title={Lie algebras},
   series={Interscience Tracts in Pure and Applied Mathematics},
   volume={10},
   publisher={Interscience Publishers, New York-London},
   date={1962},
   %pages={viii+132},
   %isbn={3-540-62627-1},
   %review={\MR{1470454}},
   %doi={10.1007/BFb0093620},
}

\bib{Ka}{article}{
   author={Kashaev, R. M.},
   title={The Heisenberg double and the pentagon relation},
   journal={Algebra i Analiz},
   volume={8},
   date={1996},
   number={4},
   pages={63--74},
   issn={0234-0852},
   translation={
      journal={St. Petersburg Math. J.},
      volume={8},
      date={1997},
      number={4},
      pages={585--592},
      issn={1061-0022},
   },
   %review={\MR{1418255}},
}

\bib{KR}{article}{
   author={Kauffman, Louis H.},
   author={Radford, David E.},
   title={Invariants of $3$-manifolds derived from finite-dimensional Hopf
   algebras},
   journal={J. Knot Theory Ramifications},
   volume={4},
   date={1995},
   number={1},
   pages={131--162},
   issn={0218-2165},
   %review={\MR{1321293}},
   %doi={10.1142/S0218216595000077},
}

\bib{Ki}{article}{
   author={Kim, Minkyu},
   title={Homology theory valued in the category of bicommutative Hopf algebras},
   note={preprint (2020), arXiv:2005.04652 [math.AT]},
}

\bib{Ko}{article}{
   author={Koda, Yuya},
   title={$O$-graphic study of closed 3-manifolds},
   journal={Hakone seminar 2010},
   note={{\tt http://hakone-seminar.com/Hakone2010.html}},
   %review={\MR{2376519}},
   %doi={10.3836/tjm/1202136686},
}

\bib{Ko2}{article}{
   author={Koda, Yuya},
   title={Branched spines and Heegaard genus of 3-manifolds},
   journal={Manuscripta Math.},
   volume={123},
   date={2007},
   number={3},
   pages={285--299},
   issn={0025-2611},
   %review={\MR{2314086}},
   %doi={10.1007/s00229-007-0097-z},
}

\bib{Ko3}{article}{
   author={Koda, Yuya},
   title={Spines, Heegaard splittings and the Reidemeister-Turaev torsion},
   journal={Tokyo J. Math.},
   volume={30},
   date={2007},
   number={2},
   pages={417--439},
   issn={0387-3870},
   %review={\MR{2376519}},
   %doi={10.3836/tjm/1202136686},
}

\bib{Ku1}{article}{
   author={Kuperberg, Greg},
   title={Involutory Hopf algebras and $3$-manifold invariants},
   journal={Internat. J. Math.},
   volume={2},
   date={1991},
   number={1},
   pages={41--66},
   issn={0129-167X},
   %review={\MR{1082836}},
   %doi={10.1142/S0129167X91000053},
}

\bib{Ku2}{article}{
   author={Kuperberg, Greg},
   title={Noninvolutory Hopf algebras and $3$-manifold invariants},
   journal={Duke Math. J.},
   volume={84},
   date={1996},
   number={1},
   pages={83--129},
   issn={0012-7094},
   %review={\MR{1394749}},
   %doi={10.1215/S0012-7094-96-08403-3},
}

\bib{L}{article}{
   author={Lu, Jiang-Hua},
   title={Hopf algebroids and quantum groupoids},
   journal={Internat. J. Math.},
   volume={7},
   date={1996},
   number={1},
   pages={47--70},
   issn={0129-167X},
   %review={\MR{1369905}},
   %doi={10.1142/S0129167X96000050},
}

\bib{MP}{article}{
   author={Majid, S.},
   author={Pacho\l , A.},
   title={Digital quantum groups},
   journal={J. Math. Phys.},
   volume={61},
   date={2020},
   number={10},
   pages={103510, 32},
   issn={0022-2488},
   %review={\MR{4165423}},
   %doi={10.1063/5.0020958},
}

\bib{R}{book}{
   author={Radford, David E.},
   title={Hopf algebras},
   series={Series on Knots and Everything},
   volume={49},
   publisher={World Scientific Publishing Co. Pte. Ltd., Hackensack, NJ,},
   date={2012},
   %pages={viii+132},
   %isbn={3-540-62627-1},
   %review={\MR{1470454}},
   %doi={10.1007/BFb0093620},
}

\bib{S}{article}{
   author={Suzuki, Sakie},
   title={The universal quantum invariant and colored ideal triangulations},
   journal={Algebr. Geom. Topol.},
   volume={18},
   date={2018},
   number={6},
   pages={3363--3402},
   issn={1472-2747},
   %review={\MR{3868224}},
   %doi={10.2140/agt.2018.18.3363},
}

\bib{TV}{article}{
   author={Turaev, V. G.},
   author={Viro, O. Ya.},
   title={State sum invariants of $3$-manifolds and quantum $6j$-symbols},
   journal={Topology},
   volume={31},
   date={1992},
   number={4},
   pages={865--902},
   issn={0040-9383},
  %review={\MR{1191386}},
   %doi={10.1016/0040-9383(92)90015-A},
}
\end{biblist}
\end{bibdiv}

\end{document}